\theoremstyle{thmstyleone}%
\newtheorem{theorem}{Theorem}%  meant for continuous numbers
\newtheorem{proposition}[theorem]{Proposition}%
\theoremstyle{thmstyletwo}%
\theoremstyle{thmstylethree}%
\newtheorem{definition}{Definition}%
\begin{document}

\title[D.Q]{DEFORMATION QUANTIZATION OF A HESSIAN KV-STRUCTURE ON $\mathrm{I\! R}^{2}$.}

%%=============================================================%%
%% Prefix   -> \pfx{Dr}
%% GivenName    -> \fnm{Joergen W.}
%% Particle -> \spfx{van der} -> surname prefix
%% FamilyName   -> \sur{Ploeg}
%% Suffix   -> \sfx{IV}
%% NatureName   -> \tanm{Poet Laureate} -> Title after name
%% Degrees  -> \dgr{MSc, PhD}
%% \author*[1,2]{\pfx{Dr} \fnm{Joergen W.} \spfx{van der} \sur{Ploeg} \sfx{IV} \tanm{Poet Laureate}
%%                 \dgr{MSc, PhD}}\email{iauthor@gmail.com}
%%=============================================================%%

\author*[1]{\fnm{} \sur{ Herguey Mopeng}}\email{mopengpeter@gmail.com}
\author[2]{\fnm{} \sur{Prosper Rosaire Mama  Assandje }}\email{mamarosaire@facsciences-uy1.cm}\equalcont{These authors
contributed equally to this work.}
\author[3]{\fnm{} \sur{Joseph Dongho }}\email{josephdongho@yahoo.fr}
\equalcont{These authors contributed equally to this work.}
\author[1]{\fnm{} \sur{Armand Tsimi  }}\email{tsimije@yahoo.fr}
\equalcont{These authors contributed equally to this work.}

\affil*[1,4]{\orgdiv{Department of Mathematics}, \orgname{University
of Douala}, \orgaddress{\street{rectorat@univ-douala.com},
\city{Douala}, \postcode{2701}, \state{Littoral},
\country{Cameroon}}}

  \affil[2]{\orgdiv{Department of Mathematics}, \orgname{University of Yaounde 1},
\orgaddress{\street{usrectorat@.univ-yaounde1.cm}, \city{yaounde},
\postcode{337}, \state{Center}, \country{Cameroon}}}

\affil[3]{\orgdiv{Department of Mathematics and Computer Science},
\orgname{University of Maroua},
\orgaddress{\street{decanat@fs.univ-maroua.cm}, \city{Maroua},
\postcode{814}, \state{Far -North}, \country{Cameroon}}}

\affil[4]{\orgdiv{Department of Mathematics }, \orgname{University
of Pala}, \orgaddress{\street{mopengpeter@gmail.com}, \city{Pala},
\postcode{20}, \state{Far -North}, \country{Tchad}}}
%%==================================%%
%% sample for unstructured abstract %%
%%==================================%%

\abstract{This paper studies the quantization of the deformation of
Hessian structures on a two-dimensional vector space, in the
framework of Koszul-Vinberg algebras.  We analyze how Hessian
structures can be deformed to obtain quantum structures while
preserving certain geometric and algebraic properties. The aim of
the paper is to establish links between deformation theory and
Hessian geometry.}

\keywords{Deformation quantification, KV-Hessian structure, Quantum
structures,Hessian structures.}

%%\pacs[JEL Classification]{D8, H51}

%%\pacs[MSC Classification]{35A01, 65L10, 65L12, 65L20, 65L70}

\maketitle

\section{Introduction}\label{sec1}
Deformation quantization has emerged as a fundamental tool in the
development of quantum theories, allowing classical algebraic
structures to be linked to quantum formulations. In this context,
the study of Hessian structures of Koszul-Vinberg algebras offers a
rich and promising perspective. These algebras, which are
characterized by special geometric and algebraic properties, play a
crucial role in the understanding of dynamical systems and field
theories. Koszul algebras, which include Lie and Poisson algebras,
play a fundamental role in mathematics, especially in algebraic
geometry and representation theory. These algebraic structures are
characterized by duality and solvability properties that facilitate
the study of modules and associated representations. However, since
every deformation theory generates its own cohomological theory,
it's important to note that this is not the case for all deformation
theories. The aim of this paper is to explore the deformation
quantization of a Hessian structure associated with a Koszul-Vinberg
algebra on $\mathrm{I\!R}^{2}$, where the main question is: how can
a Hessian KV structure on $\mathrm{I\!R}^{2}$ be quantified by
deformation? In \cite{Gerstenhaber}, several authors have addressed
the question. Andr$\acute{e}$ Haefliger \cite{HAEFLIGER} had
addressed the Cohomology of Lie algebras associated with vector
fields. He explored the relationships between the geometry of
variants and the algebraic structure of vector fields, highlighting
the topological and geometric implications of this cohomology. In
\cite{Pichereau}, had addressed the cohomology of Poisson structures
in dimension three, focusing on the algeometric and geometric
proprietorships of Poisson variants. She had explored the
relationships between Poisson cohomology and other types of
cohomology, as well as the implications of these relationships for
the classification and study of Poisson structure diformities. It
was not until the recent work of M.N. Boyom
\cite{BOYOM1,BOYOM2,BOYOM3,BOYOM5}, to see that the theory of
KV-algebraic formation has its own cohomological theory:
KV-cohomology. This tool makes it possible to describe the
deformation of KV-algebras.  Ferdinand Ngakeu et al.\cite{Boyom9}
had examined KV-cohomology, which is a form of cohomology associated
with specific geometric structures, particularly in the context of
flat refinement variants. The authors introduce the fundamental
concepts of differential geometry and explore how these notions
apply to variants with a flat affine structure. They discuss the
topological and geometric properties of these variants, as well as
applications of KV cohomology to the study of their structure.
Hessian structures, on the other hand, are geometric objects that
appear in the analysis of variants and modular spaces. They provide
a framework for studying the local and global properties of
dynamical systems and differential equations. The relationship
between these Hessian structures and Koszul-Vinberg algebra allows
us to explore deep interactions between algebra and geometry. In
\cite{Barbaresco1}, Fr$\acute{e}$d$\acute{e}$ric Barbaresco explores
the relationships between information geometry, Lie group
thermodynamics and the concepts of geometric temperature and
capacity. Quantization aims to establish a correspondence between
classical objects and their quantum analogues while preserving
certain structural properties.  In \cite{Dupont}, Dupont
Gr$\acute{e}$goire explores how the properties of algebraic
structures can be modified in a continuous and systematic way by
introducing mathematical tools such as cohomology and deformation
classes. The ability to form a Hessian structure while preserving
its essential properties is crucial for understanding how these
structures can be adapted to quantum contexts. Several authors have
worked on the question of how algebraic structures that transform
under the effect of deformation can provide tools for modeling
complex physical phenomena. Following the example of Gerstenhaber
\cite{Gerstenhaber}, who studied the fundamental concepts of ring
and algebra formation. Important results on the Cohomology of
algebras and their relation to deformations were presented. Amnon
Yekutieli dealt with the quantification of deformations   in the
context of algebraic geometry.  In \cite{Butin}, F. Butin  had
studied Poisson structures on polynomial algebras, focusing on
cohomology  associates and deformations of these structures.
F.Bayen, explored  the theory of deformations in the context of
symplectic structures and their connection with quantization,
presenting a systematic approach to understanding how symplectic
structures can be deformed and how these deformations affect
quantization. Guillemin  and Sternberg \cite{gg} have explored the
relationship between dation theory and pseudo-groups. In the light
of all this work, we were able to show that for all $u=(x,y)$,
$v=(x^{'},y^{'})$ et $w=(x^{''},y^{''})$ three vectors
of $\mathrm{I\!R}^{2}$, with $\mu(u,v)=\mu_{1}(u,v)e_{1}+\mu_{2}(u,v)e_{2}$ \\
in a basis $(e_{1},e_{2})$ de $\mathrm{I\!R}^{2}$,and for all
matrices of the bilinear forms $\mu_{1}$ and $\mu_{2}$, respectively
\begin{center}
     $\Gamma_{1}= \begin{pmatrix}
          \Gamma_{11}^{1}&\Gamma_{12}^{1}\\
           \Gamma_{21}^{1}&\Gamma_{22}^{1}
            \end{pmatrix},
       \Gamma_{2}= \begin{pmatrix}
          \Gamma_{11}^{2}&\Gamma_{12}^{2}\\
           \Gamma_{21}^{2}&\Gamma_{22}^{2}
            \end{pmatrix}$
\end{center} that $\mu$  is a KV-structure if and only if
\begin{eqnarray*}
\begin{cases}
 \Gamma_{21}^{1}(\Gamma_{12}^{2}-\Gamma_{11}^{1}-\Gamma_{21}^{2})+\Gamma_{12}^{1}(\Gamma_{11}^{1}-\Gamma_{21}^{2})
+\Gamma_{22}^{1}\Gamma_{11}^{2}=0 \\
\Gamma_{22}^{1}(2\Gamma_{12}^{2}-\Gamma_{11}^{1}-\Gamma_{21}^{2})+\Gamma_{12}^{1}(\Gamma_{12}^{1}-\Gamma_{22}^{2})=0\\
\Gamma_{11}^{2}(\Gamma_{12}^{1}-2\Gamma_{21}^{1}+\Gamma_{22}^{2})+\Gamma_{21}^{2}(\Gamma_{11}^{1}-\Gamma_{21}^{2})=0\\
\Gamma_{12}^{2}(\Gamma_{12}^{1}-\Gamma_{21}^{1}+\Gamma_{22}^{2})+\Gamma_{21}^{2}(\Gamma_{12}^{1}-\Gamma_{22}^{2})-\Gamma_{11}^{2}\Gamma_{22}^{1}=0.
\end{cases}
\end{eqnarray*} We show that, on $\mathrm{I\!R}^{2}$, the following KV-structures are
Hessian is given by
 \begin{eqnarray*}
\mu=\biggl(\begin{pmatrix}
         0&\Gamma_{22}^{2}\\
          \Gamma_{22}^{2} &0
            \end{pmatrix},
        \begin{pmatrix}
         \Gamma_{11}^{2}  &0 \\
           0&\Gamma_{22}^{2}
            \end{pmatrix}\biggr);
\mu=\biggl(\begin{pmatrix}
         \Gamma_{11}^{1} &0\\
          0 &\Gamma_{22}^{1}
            \end{pmatrix},
        \begin{pmatrix}
         0  &\Gamma_{11}^{1} \\
           \Gamma_{11}^{1}&\Gamma_{22}^{2}
            \end{pmatrix}\biggr);
\mu=\biggl(\begin{pmatrix}
         \Gamma_{11}^{1} &0\\
          0 &\Gamma_{22}^{1}
            \end{pmatrix},
        \begin{pmatrix}
         \Gamma_{11}^{2}  &0 \\
           0&\Gamma_{22}^{2}
            \end{pmatrix}\biggr);
\end{eqnarray*}
\begin{eqnarray*}
\mu=\biggl(\begin{pmatrix}
         \Gamma_{11}^{1} &\Gamma_{22}^{2}\\
          \Gamma_{22}^{2} &0
            \end{pmatrix},
        \begin{pmatrix}
         \Gamma_{11}^{2}  &0 \\
           0&\Gamma_{22}^{2}
            \end{pmatrix}\biggr);
\mu=\biggl(\begin{pmatrix}
         \Gamma_{11}^{1} &0\\
          0&\Gamma_{22}^{1}
            \end{pmatrix},
        \begin{pmatrix}
         0&\Gamma_{11}^{1} \\
           \Gamma_{11}^{1}&0
            \end{pmatrix}\biggr).
\end{eqnarray*} We show that, for the KV-algebra $(\mathrm{I\!R}^{2}, \mu)$,
$Ker\delta^{0}$ is the set of elements $\xi=(\xi_{1},\xi_{2}) \in
J(\mathrm{I\!R}^{2})$, satisfying the following equation
\begin{eqnarray*}
\begin{cases}
(\Gamma_{21}^{1}-\Gamma_{12}^{1})\xi_{2}=0\\
(\Gamma_{12}^{1}-\Gamma_{21}^{1})\xi_{1}=0\\
(\Gamma_{21}^{2}-\Gamma_{12}^{2})\xi_{2}=0\\
(\Gamma_{12}^{2}-\Gamma_{21}^{2})\xi_{1}=0\\
\end{cases}
\end{eqnarray*},  $Ker\delta^{1}$ is the set of linear applications $f_{1}\approx
\begin{pmatrix}
          \alpha&\beta\\\
          \gamma&\lambda
            \end{pmatrix}$ of $C^{1}(\mathrm{I\!R}^{2})$ satisfying the following equation
\begin{eqnarray*}
\begin{cases}
-\Gamma_{11}^{1}\alpha+\Gamma_{11}^{2}\beta-(\Gamma_{12}^{1}+\Gamma_{21}^{1})\gamma=0\\
(\Gamma_{12}^{2}-\Gamma_{11}^{1})\beta-\Gamma_{22}^{1}\gamma-\Gamma_{12}^{1}\lambda=0\\
(\Gamma_{11}^{2}-\Gamma_{11}^{1})\beta-\Gamma_{22}^{1}\gamma-\Gamma_{21}^{1}\lambda=0\\
\Gamma_{22}^{1}\alpha+(\Gamma_{22}^{2}-\Gamma_{12}^{1}-\Gamma_{21}^{1})\beta-2\Gamma_{22}^{1}\lambda=0\\
-2\Gamma_{11}^{2}\alpha+(\Gamma_{11}^{1}-\Gamma_{12}^{2}-\Gamma_{21}^{2})\gamma+\Gamma_{11}^{2}\lambda=0\\
-\Gamma_{12}^{2}\alpha-\Gamma_{11}^{2}\beta+(\Gamma_{12}^{1}-\Gamma_{22}^{2})\gamma=0\\
-\Gamma_{21}^{2}\alpha-\Gamma_{11}^{2}\beta+(\Gamma_{21}^{1}-\Gamma_{22}^{2})\gamma=0\\
-(\Gamma_{21}^{2}+\Gamma_{12}^{2})\beta+\Gamma_{22}^{1}\gamma-\Gamma_{22}^{2}\lambda=0\\
\end{cases}
\end{eqnarray*}, and $Ker\delta^{2}$ is the set of bilinear applications
$f_{2}\approx \biggl(\begin{pmatrix}
          e&f
          g&h
            \end{pmatrix},
        \begin{pmatrix}
         i&j
         k&l
            \end{pmatrix}\biggr)$ of $C^{2}(\mathrm{I\!R}^{2})$ verifying the following equation
\begin{eqnarray*}
\begin{cases}
(\Gamma_{12}^{1}-\Gamma_{21}^{1})e+\Gamma_{22}^{1}i+(\Gamma_{11}^{1}-\Gamma_{21}^{2})f+\Gamma_{21}^{1}j+(\Gamma_{12}^{2}-\Gamma_{11}^{1}
-\Gamma_{21}^{2})g-(\Gamma_{12}^{1}+\Gamma_{21}^{1})k+\Gamma_{11}^{2}h=0\\
\Gamma_{21}^{2}e+(\Gamma_{12}^{1}-2\Gamma_{21}^{1}+\Gamma_{22}^{2})i+\Gamma_{11}^{2}f-2\Gamma_{11}^{2}g+(\Gamma_{11}^{1}-2\Gamma_{21}^{2})k
+\Gamma_{11}^{2}l=0\\
-\Gamma_{22}^{1}e+(2\Gamma_{12}^{1}-\Gamma_{22}^{2})f+2\Gamma_{22}^{1}j-\Gamma_{22}^{1}k+(2\Gamma_{12}^{2}-\Gamma_{11}^{1}-\Gamma_{21}^{2})h
-\Gamma_{12}^{1}l=0\\
-\Gamma_{22}^{1}i+(\Gamma_{12}^{2}+\Gamma_{21}^{2})f+(\Gamma_{12}^{1}-\Gamma_{21}^{1}+\Gamma_{22}^{2})j-\Gamma_{12}^{1}g+(\Gamma_{12}^{1}-\Gamma_{22}^{2})k
-\Gamma_{11}^{2}h+(\Gamma_{12}^{2}-\Gamma_{21}^{2})l=0.
\end{cases}
\end{eqnarray*}
We show that, for the KV-algebra $(\mathrm{I\!R}^{2}, \mu)$,
$Im\delta^{-1}=\{0\}$; $Im\delta^{0}$ is the set of linear
applications $g\approx
\begin{pmatrix}
          u_{11}&u_{12}\\
          u_{21}&u_{22}
            \end{pmatrix}$ of $\mathrm{I\! R}^{2}$ satisfying the following equation
\begin{eqnarray*}
\begin{cases}
(\Gamma_{21}^{1}-\Gamma_{12}^{1})\xi_{2}=u_{11}\\
(\Gamma_{12}^{1}-\Gamma_{21}^{1})\xi_{1}=u_{12}\\
(\Gamma_{21}^{2}-\Gamma_{12}^{2})\xi_{2}=u_{21}\\
(\Gamma_{12}^{2}-\Gamma_{21}^{2})\xi_{1}=u_{22}.
\end{cases}
\end{eqnarray*}, and  $Im\delta^{1}$ is the set of bilinear applications $g\approx
\biggl(\begin{pmatrix}
          u_{11}&u_{12}\\
          u_{21}&u_{22}
            \end{pmatrix},
        \begin{pmatrix}
         v_{11}&v_{12}\\
         v_{221}&v_{22}
            \end{pmatrix}\biggr)$ of $\mathrm{I\!R}^{2}$ verifying the following  equation
\begin{eqnarray*}
\begin{cases}
-\Gamma_{11}^{1}\alpha+\Gamma_{11}^{2}\beta-(\Gamma_{12}^{1}+\Gamma_{21}^{1})\gamma=u_{11}\\
(\Gamma_{12}^{2}-\Gamma_{11}^{1})\beta-\Gamma_{22}^{1}\gamma-\Gamma_{12}^{1}\lambda=u_{12}\\
(\Gamma_{11}^{2}-\Gamma_{11}^{1})\beta-\Gamma_{22}^{1}\gamma-\Gamma_{21}^{1}\lambda=u_{21}\\
\Gamma_{22}^{1}\alpha+(\Gamma_{22}^{2}-\Gamma_{12}^{1}-\Gamma_{21}^{1})\beta-2\Gamma_{22}^{1}\lambda=u_{22}\\
-2\Gamma_{11}^{2}\alpha+(\Gamma_{11}^{1}-\Gamma_{12}^{2}-\Gamma_{21}^{2})\gamma+\Gamma_{11}^{2}\lambda=v_{11}\\
-\Gamma_{12}^{2}\alpha-\Gamma_{11}^{2}\beta+(\Gamma_{12}^{1}-\Gamma_{22}^{2})\gamma=v_{12}\\
-\Gamma_{21}^{2}\alpha-\Gamma_{11}^{2}\beta+(\Gamma_{21}^{1}-\Gamma_{22}^{2})\gamma=v_{21}\\
-(\Gamma_{21}^{2}+\Gamma_{12}^{2})\beta+\Gamma_{22}^{1}\gamma-\Gamma_{22}^{2}\lambda=v_{22}.
\end{cases}
\end{eqnarray*} In the same,  we show that, for $\mu \in Sol(\mathrm{I\!R}^{2}, KV)$,
be a KV-Hessian structure defined by
\begin{center}
  $\mu =\biggl(\begin{pmatrix}
          0&a\\
          a&0
            \end{pmatrix},
        \begin{pmatrix}
         b&0\\
         0&a
            \end{pmatrix}\biggr),   a\neq b \neq 0.$
\end{center}
We have, $ H_{KV}^{0}(\mu)\simeq \mathrm{I\!R}^{2}$, and
$H_{KV}^{1}(\mu)\simeq \{0\}$. We show that, for all  be a
KV-Hessian structure $\mu \in Sol(\mathrm{I\!R}^{2}, KV)$ defined by
\begin{center}
 $\mu =\biggl(\begin{pmatrix}
          0&a\\
          a&0
            \end{pmatrix},
        \begin{pmatrix}
         b&0\\
         0&a
            \end{pmatrix}\biggr),   a\neq b \neq 0$.
\end{center}
 we have $H_{KV}^{2}(\mu)\cong \biggl\{(E_{12},0),(E_{21},0),(0,E_{22})\biggr\}.$
For  all KV-algebra $(A,\mu)$, such  that $\mu_{t}$ be a deformation
of $\mu$; with $\mu_{t}=
\mu+t\nu_{1}+t^{2}\nu_{2}+t^{3}\nu_{3}+\dots$ we show that,
$\mu_{t}$ is a KV-structure $\Leftrightarrow$ $\nu_{i} \in
Ker\delta^{2}$, $i=1,2,3,\dots.$ We show that, for one  KV-Hessian
structure $\mu$ defined by
\begin{center}
  $\mu =\biggl(\begin{pmatrix}
          0&a\\
          a&0
            \end{pmatrix},
        \begin{pmatrix}
         b&0\\
         0&a
            \end{pmatrix}\biggr),   a\neq b \neq 0$
\end{center}
in a $(e_{1},e_{2})$ basis of $\mathrm{I\!R}^{2}$, the deformations
$\mu_{t}$ of the $\mu$ KV-structure are given by
\begin{eqnarray*}
        \biggl(\begin{pmatrix}
         a_{11}^{1}\sum_{i\geq 1}t^{i}  & a+ a_{12}^{1}\sum_{i\geq 1}t^{i}\\
           a+ a_{12}^{1}\sum_{i\geq 1}t^{i}&a_{22}^{1}\sum_{i\geq 1}t^{i}
            \end{pmatrix},
        \begin{pmatrix}
        b+ a_{11}^{2}\sum_{i\geq 1}t^{i}&(a_{21}^{2}+\frac{1}{2}a_{12}^{1})\sum_{i\geq 1}t^{i}\\
        (a_{21}^{2}+\frac{1}{2}a_{12}^{1})\sum_{i\geq 1}t^{i} &a+a_{12}^{1}\sum_{i\geq 1}t^{i}
            \end{pmatrix}\biggr)
\end{eqnarray*}
In section 2, We begin by recalling the basic concepts associated
with Koszul-Vinberg algebras. In section 3, we present the quantum
deformation. In section 4, we present the deformations of a Hessian
KV-structure on $\mathrm{I\!R}^{2}$. In section 5, the conclusion.

\section{Preliminaries}\label{sec2}
\subsection{Introduction to Koszul-Vinberg Algebras }
K is a commutative body of characteristic zero. Let A be a K-algebra; a, b and c elements of A.\\
- A multiplication in A is a K-bilinear application denoted by:
\begin{eqnarray*}
\mu :(a,b)\mapsto ab \in A
\end{eqnarray*}
- The associator of $\mu$ is the trilinear application defined by:
\begin{eqnarray*}
Ass_{\mu}: (a,b,c) \mapsto \mu(\mu(a,b),c)-\mu(a,\mu(b,c))\in A.
\end{eqnarray*}
-The KV-anomaly of $\mu$ is the trilinear application defined by:
\begin{eqnarray}
KV_{\mu}:(a,b,c) \mapsto Ass_{\mu}(a,b,c)-Ass_{\mu}(b,a,c)\in A.
\end{eqnarray}
\begin{definition}\cite{BOYOM1}
A K-algebra A is called a KV-algebra or Koszul-Vinberg algebra, if $ Ass_{\mu}(a,b,c)=Ass_{\mu}(b,a,c)$.\\
It means :  $KV_{\mu}=0$
\end{definition}
\subsubsection{KV-bimodule}
Let M be a vector space over K. For all a elements of A and x
elements of M, we define the following two K-bilinear applications:
\begin{center}
$\mu_{1}:(a,x)\mapsto ax \in M$ et $\mu_{2}:(x,a)\mapsto x a \in M$
\end{center}
For all b element for A, we note:
\begin{eqnarray*}
Ass(a,b,x)&=&(ab)x-a(bx)
\end{eqnarray*}
\begin{eqnarray*}
Ass(a,x,b)&=&(ax)b-a(xb)
\end{eqnarray*}
\begin{eqnarray*}
Ass(x,a,b)&=&(xa)b-x(ab)
\end{eqnarray*}
\begin{eqnarray*}
KV_{1}(a,b,x)&=&Ass(a,b,x)-Ass(b,a,x)
\end{eqnarray*}
\begin{eqnarray*}
KV_{2}(a,x,b)&=&Ass(a,x,b)-Ass(x,a,b)
\end{eqnarray*}
 \begin{definition}\cite{BOYOM3}
The K-vector space M equipped with the external operations $\mu_{1}$ and $\mu_{2}$ is called a KV-bimodule over A if the following operations are verified:\\
(i) $KV_{\mu_{1}}(a,b,x)= 0$\\
(ii) $KV_{\mu_{2}}(a,x,b)=0$\\
M is said to be a left-hand (resp. right-hand) KV-module if the
bilinear application $\mu_{2}$ (resp. $\mu_{1}$) is zero.
\end{definition}
\subsubsection{KV-cohomologie}
q is a positive integer. Let A be a KV-algebra and M a KV-bimodule
over A. Let $C^{q}(A,M)$ be the vector space of q-linear
applications over K from A to M. For all a elements of A and f
elements of $C^{q}(A,M)$, we define the following bilinear
applications
\begin{center}
$(a,f)\mapsto a.f \in C^{q}(A,M)$ et $(f,a)\mapsto f.a \in
C^{q}(A,M)$
\end{center}
$C^{q}(A,M)$ is a KV-bimodule on the KV-algebra A, for the following
actions of A on $C^{q}(A,M)$:
\begin{center}
$(af)(a_{1}, \dots,a_{q})=a(f(a_{1}, \dots,a_{q}))-\sum_{j=1}^{q}f(a_{1},\dots,aa_{j},\dots,a_{q})$\\
$(fa)(a_{1}, \dots,a_{q})=(f(a_{1}, \dots,a_{q}))a$,
\end{center}
where $(a_{1}, \dots,a_{q})$ is a element for $A^{q}$.\\
For each $\rho=1,\dots,q$, and f an element of $C^{q}(A,M)$, denote
$e_{\rho}(a)$, the following A-linear application:
\begin{center}
$e_{\rho}(a):f \mapsto e_{\rho}(a)f \in C^{q-1}(A,M)$
\end{center}
such that: $(e_{\rho}(a)f)(a_{1}, \dots.,a_{q-1})=f(a_{1},
\dots,a_{\rho-1},a,a_{\rho},\dots,a_{q-1})$,
for all $(a_{1}, \dots,a_{q-1})$ element of $A^{q-1}$.\\
We then define the cobord operator by: $\delta^{q}:f\mapsto
\delta^{q} f \in C^{q+1}(A,M)$
 such that:
\begin{eqnarray*}
\delta^{q} f(a_{1}\dots a_{q+1})=\Sigma_{1 \leq j\leq
q}(-1)^{j}\biggl\{(a_{j}f)(a_{1},\dots,\widehat{a}_{j}\dots,a_{q+1})
                                    +(e_{q}(a_{j})(fa_{q+1}))(a_{1}\dots\widehat{a}_{j}\dots\widehat{a}_{q+1})\biggr \},
\end{eqnarray*}
 where $(a_{1}, \dots,a_{q+1})$ is a element of $A^{q+1}$. We then check that: $\delta^{q+1} \circ \delta^{q} = 0$; and so $Im \delta^{q-1}\subset Ker \delta^{q}$.
Note $C_{KV}(A,M)=\oplus_{q\geq 0}C^{q}(A,M)$, we have a complex of
cochains:
\begin{eqnarray*}
 C^{0}(A,M) \stackrel{ \delta^{0} }{\longrightarrow} C^{1}(A,M)\stackrel{ \delta^{1} }{\longrightarrow}
 \dots C^{q}(A,M)\stackrel{ \delta^{q} }{\longrightarrow}C^{q+1}(A,M)\stackrel{ \delta^{q+1} }{\longrightarrow}C^{q+2}(A,M)\longrightarrow \dots
\end{eqnarray*}
\begin{definition}\cite{BOYOM1},\cite{BOYOM3}
 The cohomology of the KV complex $(C_{KV}(A,M),\delta)$
  is called the KV cohomology of the KV algebra A with values in the KV-bimodule M: $ H_{KV}(A,M)=\oplus_{q \geq 0}H_{KV}^{q}(A,M)$,
\end{definition}
where
\begin{equation}
 H_{KV}^{q}(A,M)=\frac{Ker \delta^{q}}{Im \delta^{q-1}}.
\end{equation}
 $\bullet$ The elements of $Ker \delta^{q}$ are called cocycles of degree q of the complex $(C_{KV}(A,M),\delta)$.\\
 $\bullet$ The elements of $Im \delta^{q-1}$ are called cobord of degree q of the complex $(C_{KV}(A,M),\delta)$.\\
 $\bullet$ the vector space $H_{KV}^{q}(A,M)$ is the $q^{th}$ Cohomology space of the complex $(C_{KV}(A,M),\delta)$.

\subsection{Description of a Hessian KV-structure}
A Hessian structure is a bilinear form that satisfies the symmetry
and non-degeneracy conditions. In geometry, Hessian structures are
often used to study the local properties of variants and the dynamic
behaviour of systems. In the context of Koszul-Vinberg algebra,
Hessian structures allow us to establish correspondences between
algebraic and geometric properties.
\begin{definition}\cite{Shima}
A $\mu$-KV-structure is Hessian if it is symmetrical and
non-degenerate.
\end{definition}

\section{Quantum deformation}\label{sec3}
Let $(A,\mu)$ be a KV-algebra, a deformation of $\mu$ is given by
$\mu_{t}= \mu+\sum_{s\geq1}t^{s}\nu_{s}$. For any algebra
$\mathrm{I\ ! R}^{2}$, $\mu)$, and for all $u=(x,y)$,
$v=(x^{'},y^{'})$, $w=(x^{''},y^{''})$ three vectors of $\mathrm{I\!
R}^{2}$. In a base $(e_{1},e_{2})$ de $\mathrm{I\! R}^{2}$, we have
$\mu(u,v)=\mu_{1}(u,v)e_{1}+\mu_{2}(u,v)e_{2}$. The matrix of the
bilinear forms $\mu_{1}$ and $\mu_{2}$, are given respectively by
\begin{center}
     $\Gamma_{1}= \begin{pmatrix}
          \Gamma_{11}^{1}&\Gamma_{12}^{1}\\
           \Gamma_{21}^{1}&\Gamma_{22}^{1}
            \end{pmatrix},
       \Gamma_{2}= \begin{pmatrix}
          \Gamma_{11}^{2}&\Gamma_{12}^{2}\\
           \Gamma_{21}^{2}&\Gamma_{22}^{2}
            \end{pmatrix}$
\end{center} We have the following proposition
\begin{proposition}
 $\mu$  is a KV-structure if and only if
\begin{eqnarray}
\begin{cases}\label{eq1}
 \Gamma_{21}^{1}(\Gamma_{12}^{2}-\Gamma_{11}^{1}-\Gamma_{21}^{2})+\Gamma_{12}^{1}(\Gamma_{11}^{1}-\Gamma_{21}^{2})
+\Gamma_{22}^{1}\Gamma_{11}^{2}=0 \\
\Gamma_{22}^{1}(2\Gamma_{12}^{2}-\Gamma_{11}^{1}-\Gamma_{21}^{2})+\Gamma_{12}^{1}(\Gamma_{12}^{1}-\Gamma_{22}^{2})=0\\
\Gamma_{11}^{2}(\Gamma_{12}^{1}-2\Gamma_{21}^{1}+\Gamma_{22}^{2})+\Gamma_{21}^{2}(\Gamma_{11}^{1}-\Gamma_{21}^{2})=0\\
\Gamma_{12}^{2}(\Gamma_{12}^{1}-\Gamma_{21}^{1}+\Gamma_{22}^{2})+\Gamma_{21}^{2}(\Gamma_{12}^{1}-\Gamma_{22}^{2})-\Gamma_{11}^{2}\Gamma_{22}^{1}=0\\
\end{cases}
\end{eqnarray}
\end{proposition}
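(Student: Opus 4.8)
The plan is to use that the KV-anomaly $KV_{\mu}$ introduced in Section~\ref{sec2} is a trilinear map on $\mathrm{I\!R}^{2}$, so that by multilinearity $\mu$ is a KV-structure, i.e. $KV_{\mu}\equiv 0$, if and only if $KV_{\mu}(e_{i},e_{j},e_{k})=0$ for every triple of basis vectors $e_{i},e_{j},e_{k}$. I would first observe that the definition $KV_{\mu}(a,b,c)=Ass_{\mu}(a,b,c)-Ass_{\mu}(b,a,c)$ makes $KV_{\mu}$ antisymmetric in its first two arguments, $KV_{\mu}(b,a,c)=-KV_{\mu}(a,b,c)$. Hence $KV_{\mu}(e_{1},e_{1},e_{k})=KV_{\mu}(e_{2},e_{2},e_{k})=0$ automatically, and $KV_{\mu}(e_{2},e_{1},e_{k})$ carries the same information as $KV_{\mu}(e_{1},e_{2},e_{k})$. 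The vanishing of $KV_{\mu}$ therefore reduces to the two vector equations $KV_{\mu}(e_{1},e_{2},e_{1})=0$ and $KV_{\mu}(e_{1},e_{2},e_{2})=0$, that is, to exactly four scalar equations once each is decomposed along $(e_{1},e_{2})$ --- which is precisely the number of lines appearing in \eqref{eq1}.

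The second step is the explicit computation. Using $\mu(e_{i},e_{j})=\Gamma_{ij}^{1}e_{1}+\Gamma_{ij}^{2}e_{2}$ from the definition of the matrices $\Gamma_{1},\Gamma_{2}$, I expand $Ass_{\mu}(e_{1},e_{2},e_{1})=\mu(\mu(e_{1},e_{2}),e_{1})-\mu(e_{1},\mu(e_{2},e_{1}))$ by bilinearity, substitute the structure constants, and collect the $e_{1}$- and $e_{2}$-coefficients; I repeat this for $Ass_{\mu}(e_{2},e_{1},e_{1})$ and subtract to obtain $KV_{\mu}(e_{1},e_{2},e_{1})$. Regrouping the $e_{1}$-component by the common factors $\Gamma_{21}^{1},\Gamma_{12}^{1},\Gamma_{22}^{1}$ produces $\Gamma_{21}^{1}(\Gamma_{12}^{2}-\Gamma_{11}^{1}-\Gamma_{21}^{2})+\Gamma_{12}^{1}(\Gamma_{11}^{1}-\Gamma_{21}^{2})+\Gamma_{22}^{1}\Gamma_{11}^{2}$, the left-hand side of the first equation of \eqref{eq1}; regrouping the $e_{2}$-component by $\Gamma_{11}^{2},\Gamma_{21}^{2}$ produces $\Gamma_{11}^{2}(\Gamma_{12}^{1}-2\Gamma_{21}^{1}+\Gamma_{22}^{2})+\Gamma_{21}^{2}(\Gamma_{11}^{1}-\Gamma_{21}^{2})$, the third equation. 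Carrying out the identical computation for $KV_{\mu}(e_{1},e_{2},e_{2})$ gives, after the analogous regrouping, the second and fourth equations of \eqref{eq1}.

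Finally, since $\mu$ being a KV-structure is equivalent to the simultaneous vanishing of these four expressions, I conclude that $\mu$ is a KV-structure if and only if the system \eqref{eq1} holds; because the equivalence is obtained component by component, the forward and backward implications come out at the same time and no separate converse argument is needed.

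The only real difficulty is bookkeeping rather than ideas: each associator expands into four terms, each a product of two structure constants times a basis vector, so each evaluation of $KV_{\mu}$ involves about a dozen monomials that must be regrouped --- with the correct signs --- into the compact factored form appearing in \eqref{eq1}. To control this I would lay out a short table of the products $\Gamma_{\cdot\cdot}^{k}\Gamma_{\cdot\cdot}^{l}$ before collecting terms, but beyond this clerical care the argument is entirely routine.
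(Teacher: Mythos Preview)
Your approach is correct: exploiting the antisymmetry of $KV_{\mu}$ in its first two arguments to reduce the eight possible triples to the two vector equations $KV_{\mu}(e_{1},e_{2},e_{1})=0$ and $KV_{\mu}(e_{1},e_{2},e_{2})=0$, and then reading off the four scalar conditions in \eqref{eq1}, is exactly the right strategy, and the sample regrouping you exhibit for the $e_{1}$- and $e_{2}$-components of $KV_{\mu}(e_{1},e_{2},e_{1})$ does indeed reproduce the first and third lines of \eqref{eq1}. The paper itself does not give an argument here at all --- its proof consists of a reference to \cite{Mopeng} --- so your direct computation supplies precisely what the paper omits, and there is nothing to compare against.
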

\begin{proof}
See in \cite{Mopeng}.
\end{proof}

Let $Sol(\mathrm{I\! R}^{2}, KV)$ be the set of solutions of the
equation Eq.~(\ref{eq1}). Let $\mu$ be an element of
$Sol(\mathrm{I\! R}^{2}, KV)$. We have the following classification
of $\mu$:
\subsubsection*{Classe 1: $\mu$ is degenerate}
 \begin{eqnarray*}
\mu= \biggl(\begin{pmatrix}
         0 &0\\
          \Gamma_{21}^{1} &\Gamma_{22}^{1}
            \end{pmatrix},
        \begin{pmatrix}
          0&0\\
           0&\Gamma_{22}^{2}
            \end{pmatrix}\biggr);
\mu=\biggl(\begin{pmatrix}
         0&\Gamma_{22}^{2}\\
          0 &\Gamma_{22}^{2}
            \end{pmatrix},
        \begin{pmatrix}
         0&0\\
           0&\Gamma_{22}^{2}
            \end{pmatrix}\biggr);
\mu=\biggl(\begin{pmatrix}
         0 &0\\
          \Gamma_{21}^{1}&\Gamma_{22}^{1}
            \end{pmatrix},
        \begin{pmatrix}
         0&0 \\
         0&0
            \end{pmatrix}\biggr);
\end{eqnarray*}
\begin{eqnarray*}
\mu=\biggl(\begin{pmatrix}
         \Gamma_{11}^{1} &0\\
          \Gamma_{22}^{2}&0
            \end{pmatrix},
        \begin{pmatrix}
         0 & \Gamma_{11}^{1}\\
           0&\Gamma_{22}^{2}
        \end{pmatrix}\biggr);
\mu=\biggl(\begin{pmatrix}
         \Gamma_{11}^{1}&\Gamma_{22}^{2}\\
          0&0
            \end{pmatrix},
        \begin{pmatrix}
         0&0 \\
           \Gamma_{11}^{1}&\Gamma_{22}^{2}
            \end{pmatrix}\biggr);
\mu=\biggl(\begin{pmatrix}
          \Gamma_{11}^{1}&0\\
          0 &0
            \end{pmatrix},
        \begin{pmatrix}
         0 &0 \\
           \Gamma_{11}^{1}&0
            \end{pmatrix}\biggr);
\end{eqnarray*}
\begin{eqnarray*}
\mu=\biggl(\begin{pmatrix}
         0 &0\\
          0 &0
            \end{pmatrix},
        \begin{pmatrix}
         0 & \Gamma_{12}^{2}\\
           0&0
        \end{pmatrix}\biggr);
\mu=\biggl(\begin{pmatrix}
         0 &0\\
          \Gamma_{21}^{1}&0
            \end{pmatrix},
        \begin{pmatrix}
         0  & 0\\
           0&0
            \end{pmatrix}\biggr);
\mu=\biggl(\begin{pmatrix}
         \Gamma_{11}^{1} &0\\
          0 &0
            \end{pmatrix},
        \begin{pmatrix}
        \Gamma_{11}^{2} &\Gamma_{12}^{2} \\
           0&0
            \end{pmatrix}\biggr);
\end{eqnarray*}
\begin{eqnarray*}
\mu=\biggl(\begin{pmatrix}
         \Gamma_{11}^{1} &0\\
          0 &0
            \end{pmatrix},
        \begin{pmatrix}
        0 & \Gamma_{12}^{2}\\
           0&0
        \end{pmatrix}\biggr);
\mu=\biggl(\begin{pmatrix}
         0 &\Gamma_{22}^{2}\\
          0 &0
            \end{pmatrix},
        \begin{pmatrix}
         0  & 0\\
           0&\Gamma_{22}^{2}
            \end{pmatrix}\biggr);
\mu=\biggl(\begin{pmatrix}
         0 &0\\
          \Gamma_{21}^{1} &0
            \end{pmatrix},
        \begin{pmatrix}
          0 & 0\\
           0&\Gamma_{22}^{2}
            \end{pmatrix}\biggr);
\end{eqnarray*}
\begin{eqnarray*}
\mu=\biggl(\begin{pmatrix}
         0 &0\\
          \Gamma_{21}^{1} &0
            \end{pmatrix},
        \begin{pmatrix}
        \Gamma_{11}^{2} & \Gamma_{12}^{2}\\
           0&0
        \end{pmatrix}\biggr);
\mu=\biggl(\begin{pmatrix}
         0&0\\
          0&0
            \end{pmatrix},
        \begin{pmatrix}
        \Gamma_{11}^{2}&\Gamma_{12}^{2}  \\
         0&0
            \end{pmatrix}\biggr).
\end{eqnarray*}
\subsubsection*{Classe 2: $\mu$  is degenerate and symmetrical}
 \begin{eqnarray*}
\mu= \biggl(\begin{pmatrix}
         0 &0\\
          0 &\Gamma_{22}^{1}
            \end{pmatrix},
        \begin{pmatrix}
          0&0\\
           0&\Gamma_{22}^{2}
            \end{pmatrix}\biggr);
\mu=\biggl(\begin{pmatrix}
         0&0\\
          0 &\Gamma_{22}^{1}
            \end{pmatrix},
        \begin{pmatrix}
         0&0\\
           0&0
            \end{pmatrix}\biggr);
\mu=\biggl(\begin{pmatrix}
         0 &0\\
          0&0
            \end{pmatrix},
        \begin{pmatrix}
         0&0 \\
         0&\Gamma_{22}^{2}
            \end{pmatrix}\biggr);
\end{eqnarray*}
\begin{eqnarray*}
\mu=\biggl(\begin{pmatrix}
         \Gamma_{11}^{1} &0\\
          0&0
            \end{pmatrix},
        \begin{pmatrix}
         0&0 \\
         0&\Gamma_{22}^{2}
            \end{pmatrix}\biggr);
\mu=\biggl(\begin{pmatrix}
         0&0\\
          0&0
            \end{pmatrix},
        \begin{pmatrix}
         \Gamma_{11}^{2} &0 \\
         0&0
            \end{pmatrix}\biggr).
\end{eqnarray*}
\subsubsection*{Classe 3: $\mu$  is non-degenerate }
 \begin{eqnarray*}
\mu=\biggl(\begin{pmatrix}
         \Gamma_{11}^{1} &\Gamma_{22}^{2}\\
          2\Gamma_{22}^{2} &0
            \end{pmatrix},
        \begin{pmatrix}
         0  &2\Gamma_{11}^{1} \\
           \Gamma_{11}^{1}&\Gamma_{22}^{2}
            \end{pmatrix}\biggr);
\mu=\biggl(\begin{pmatrix}
         \Gamma_{11}^{1} &\Gamma_{12}^{1}\\
         \Gamma_{21}^{1} &\Gamma_{22}^{1}
            \end{pmatrix},
        \begin{pmatrix}
         \Gamma_{11}^{2}  & \Gamma_{12}^{2}\\
           \Gamma_{21}^{2}&\Gamma_{22}^{2}
            \end{pmatrix}\biggr),
\end{eqnarray*}
then $\Gamma_{12}^{1}\neq \Gamma_{21}^{1}, \Gamma_{12}^{2}\neq
\Gamma_{21}^{2}$ and the others fixed.
\subsubsection*{Classe 4: $\mu$ is non-degenerate and symmetrical}
 \begin{eqnarray*}
\mu=\biggl(\begin{pmatrix}
         0&\Gamma_{22}^{2}\\
          \Gamma_{22}^{2} &0
            \end{pmatrix},
        \begin{pmatrix}
         \Gamma_{11}^{2}  &0 \\
           0&\Gamma_{22}^{2}
            \end{pmatrix}\biggr);
\mu=\biggl(\begin{pmatrix}
         \Gamma_{11}^{1} &0\\
          0 &\Gamma_{22}^{1}
            \end{pmatrix},
        \begin{pmatrix}
         0  &\Gamma_{11}^{1} \\
           \Gamma_{11}^{1}&\Gamma_{22}^{2}
            \end{pmatrix}\biggr);
\mu=\biggl(\begin{pmatrix}
         \Gamma_{11}^{1} &0\\
          0 &\Gamma_{22}^{1}
            \end{pmatrix},
        \begin{pmatrix}
         \Gamma_{11}^{2}  &0 \\
           0&\Gamma_{22}^{2}
            \end{pmatrix}\biggr);
\end{eqnarray*}
\begin{eqnarray*}
\mu=\biggl(\begin{pmatrix}
         \Gamma_{11}^{1} &\Gamma_{22}^{2}\\
          \Gamma_{22}^{2} &0
            \end{pmatrix},
        \begin{pmatrix}
         \Gamma_{11}^{2}  &0 \\
           0&\Gamma_{22}^{2}
            \end{pmatrix}\biggr);
\mu=\biggl(\begin{pmatrix}
         \Gamma_{11}^{1} &0\\
          0&\Gamma_{22}^{1}
            \end{pmatrix},
        \begin{pmatrix}
         0&\Gamma_{11}^{1} \\
           \Gamma_{11}^{1}&0
            \end{pmatrix}\biggr).
\end{eqnarray*}
\subsubsection*{Classe 5: $\mu$ is mixed}
 \begin{eqnarray*}
\mu= \biggl(\begin{pmatrix}
         0 &\Gamma_{22}^{2}\\
          \Gamma_{21}^{1} &0
            \end{pmatrix},
        \begin{pmatrix}
          0&0\\
           0&\Gamma_{22}^{2}
            \end{pmatrix}\biggr);
\mu=\biggl(\begin{pmatrix}
         0 &\Gamma_{22}^{2}\\
          \Gamma_{21}^{1} &\Gamma_{22}^{1}
            \end{pmatrix},
        \begin{pmatrix}
         0 & 0\\
          0 &\Gamma_{22}^{2}
            \end{pmatrix}\biggr);
\mu=\biggl(\begin{pmatrix}
         2\Gamma_{12}^{2} &\Gamma_{22}^{2}\\
          2\Gamma_{22}^{2} &0
            \end{pmatrix},
        \begin{pmatrix}
         0 & \Gamma_{12}^{2}\\
         0&\Gamma_{22}^{2}
            \end{pmatrix}\biggr).
\end{eqnarray*}
\begin{eqnarray*}
\mu= \biggl(\begin{pmatrix}
         \Gamma_{11}^{1} &0\\
          0 &0
            \end{pmatrix},
        \begin{pmatrix}
          \Gamma_{11}^{2}&\Gamma_{12}^{2}\\
           \Gamma_{11}^{1}&0
            \end{pmatrix}\biggr);
\mu=\biggl(\begin{pmatrix}
         \Gamma_{11}^{1} &0\\
          0 &0
            \end{pmatrix},
        \begin{pmatrix}
         0 & \Gamma_{12}^{2}\\
          \Gamma_{11}^{1} &0
            \end{pmatrix}\biggr);
\mu=\biggl(\begin{pmatrix}
         \Gamma_{11}^{1} &0\\
          \Gamma_{21}^{1} &0
            \end{pmatrix},
        \begin{pmatrix}
         0 & 2\Gamma_{11}^{1}\\
          \Gamma_{11}^{1} &2\Gamma_{21}^{1}
            \end{pmatrix}\biggr).
\end{eqnarray*}
\subsubsection*{Classe 6: $\mu$ is mixed and symmetrical}
 \begin{eqnarray*}
\mu= \biggl(\begin{pmatrix}
         \Gamma_{11}^{1} &0\\
          0 &0
            \end{pmatrix},
        \begin{pmatrix}
          0&\Gamma_{11}^{1}\\
           \Gamma_{11}^{1}&\Gamma_{22}^{2}
            \end{pmatrix}\biggr);
\mu=\biggl(\begin{pmatrix}
         \Gamma_{11}^{1} &\Gamma_{22}^{2}\\
          \Gamma_{22}^{2} &0
            \end{pmatrix},
        \begin{pmatrix}
         0 & 0\\
          0 &\Gamma_{22}^{2}
            \end{pmatrix}\biggr);
\mu= \biggl(\begin{pmatrix}
         \Gamma_{11}^{1} &0\\
          0 &\Gamma_{22}^{1}
            \end{pmatrix},
        \begin{pmatrix}
          \Gamma_{11}^{2}&0\\
           0&0
            \end{pmatrix}\biggr);
\end{eqnarray*}
\begin{eqnarray*}
\mu=\biggl(\begin{pmatrix}
         0 &0\\
          0 &\Gamma_{22}^{1}
            \end{pmatrix},
        \begin{pmatrix}
         \Gamma_{11}^{2} & 0\\
          0 &\Gamma_{22}^{2}
            \end{pmatrix}\biggr)
\end{eqnarray*}
The following proposition characterizes Hessian KV-structures on
$\mathrm{I\!R}^{2}$.
\begin{proposition}
On $\mathrm{I\!R}^{2}$, the following KV-structures are Hessian:
 \begin{eqnarray*}
\mu=\biggl(\begin{pmatrix}
         0&\Gamma_{22}^{2}\\
          \Gamma_{22}^{2} &0
            \end{pmatrix},
        \begin{pmatrix}
         \Gamma_{11}^{2}  &0 \\
           0&\Gamma_{22}^{2}
            \end{pmatrix}\biggr);
\mu=\biggl(\begin{pmatrix}
         \Gamma_{11}^{1} &0\\
          0 &\Gamma_{22}^{1}
            \end{pmatrix},
        \begin{pmatrix}
         0  &\Gamma_{11}^{1} \\
           \Gamma_{11}^{1}&\Gamma_{22}^{2}
            \end{pmatrix}\biggr);
\mu=\biggl(\begin{pmatrix}
         \Gamma_{11}^{1} &0\\
          0 &\Gamma_{22}^{1}
            \end{pmatrix},
        \begin{pmatrix}
         \Gamma_{11}^{2}  &0 \\
           0&\Gamma_{22}^{2}
            \end{pmatrix}\biggr);
\end{eqnarray*}
\begin{eqnarray*}
\mu=\biggl(\begin{pmatrix}
         \Gamma_{11}^{1} &\Gamma_{22}^{2}\\
          \Gamma_{22}^{2} &0
            \end{pmatrix},
        \begin{pmatrix}
         \Gamma_{11}^{2}  &0 \\
           0&\Gamma_{22}^{2}
            \end{pmatrix}\biggr);
\mu=\biggl(\begin{pmatrix}
         \Gamma_{11}^{1} &0\\
          0&\Gamma_{22}^{1}
            \end{pmatrix},
        \begin{pmatrix}
         0&\Gamma_{11}^{1} \\
           \Gamma_{11}^{1}&0
            \end{pmatrix}\biggr).
\end{eqnarray*}
\end{proposition}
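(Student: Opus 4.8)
The plan is to treat the five families one at a time and, for each, to verify the three conditions that together define a Hessian KV-structure in the sense of the definition above: that $\mu$ solves the KV system Eq.~(\ref{eq1}), that $\mu$ is symmetric, and that $\mu$ is non-degenerate. These five families are exactly the members of Class~4 (``$\mu$ non-degenerate and symmetrical'') of the classification of $Sol(\mathrm{I\!R}^{2}, KV)$ displayed above, so the statement will follow once the three conditions are confirmed; I will nevertheless run the checks directly to keep the argument self-contained.

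I would begin with symmetry, which is immediate: in each of the five families both coordinate matrices $\Gamma_{1}$ and $\Gamma_{2}$ have equal $(1,2)$ and $(2,1)$ entries, so $\mu_{1}(u,v)=\mu_{1}(v,u)$ and $\mu_{2}(u,v)=\mu_{2}(v,u)$, whence $\mu(u,v)=\mu(v,u)$ for all $u,v$. Next I would substitute the entries of each family into the four polynomial relations of Eq.~(\ref{eq1}). For the four families carrying a nonzero off-diagonal entry ($\Gamma_{22}^{2}$ in the skew slot of $\Gamma_{1}$, or $\Gamma_{11}^{1}$ in that of $\Gamma_{2}$) the relations collapse to identities of the type $\Gamma_{22}^{2}(\Gamma_{22}^{2}-\Gamma_{22}^{2})=0$ and $\Gamma_{11}^{1}\Gamma_{22}^{2}-\Gamma_{11}^{1}\Gamma_{22}^{2}=0$, so Eq.~(\ref{eq1}) holds identically in the free parameters.

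The fully diagonal family is where I expect the real work to lie: there Eq.~(\ref{eq1}) does not vanish formally but reduces to relations among the products of the diagonal entries, so one must identify exactly which combinations of the parameters are admissible, and then reconcile this with the requirement of non-degeneracy. For the last condition I would, in each family, write down the matrices of left and right multiplication by a generic vector and check that $\mu$ has no nonzero universal annihilator, i.e. that the corresponding determinant is nonzero; this reduces to the non-vanishing of the relevant free parameters, in line with the genericity hypothesis $a\neq b\neq 0$ imposed on the representative Hessian structure used in the sequel. Once symmetry, the KV identity Eq.~(\ref{eq1}), and non-degeneracy are all established for a given family, the definition of a Hessian KV-structure yields the claim for that family, and running through all five completes the proof.
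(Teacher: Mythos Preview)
Your approach is correct and, in fact, more explicit than the paper's. The paper's proof does not run through the five families one by one; instead it argues abstractly that $\mu(u,v)=\mu(v,u)$ is equivalent to both $\Gamma_{1}$ and $\Gamma_{2}$ being symmetric, and that non-degeneracy of $\mu$ amounts to non-degeneracy of $\Gamma_{1}$ and $\Gamma_{2}$. The proposition then follows because the five families listed are precisely the members of Class~4, already labelled ``non-degenerate and symmetrical'' in the classification of $Sol(\mathrm{I\!R}^{2}, KV)$ displayed just before the statement; in particular the paper does not re-check Eq.~(\ref{eq1}) here, taking membership in $Sol(\mathrm{I\!R}^{2}, KV)$ as given from the classification.

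Your direct case-by-case verification buys self-containment and surfaces a subtlety the paper's argument leaves implicit: as you correctly anticipate, the fully diagonal family does not satisfy Eq.~(\ref{eq1}) for arbitrary values of the four diagonal parameters (substitution yields the constraints $\Gamma_{22}^{1}\Gamma_{11}^{2}=\Gamma_{22}^{1}\Gamma_{11}^{1}=\Gamma_{11}^{2}\Gamma_{22}^{2}=0$), so the admissible parameter locus must be cut down and then reconciled with non-degeneracy. The paper simply inherits this from the prior classification without comment. Either route is acceptable; yours makes the hidden constraints visible, while the paper's is shorter because it leans on work already done.
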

\begin{proof}
$\bullet$ If $\mu(u,v)=\mu(v,u)$, then we have:
$(u^{t}\Gamma_{1}v,u^{t}\Gamma_{2}v)=(v^{t}\Gamma_{1}u,v^{t}\Gamma_{2}u).$\\
This implies that: $ \begin{cases}
u^{t}\Gamma_{1}v=v^{t}\Gamma_{1}u\\
u^{t}\Gamma_{2}v=v^{t}\Gamma_{2}u\\
     \end{cases}$.
So $\Gamma_{1}$ et $\Gamma_{2}$ are symmetrical.\\
$\bullet$ If $\mu(u,v)=0$, then we have:
 $\begin{cases}
u^{t}\Gamma_{1}v=0\\
u^{t}\Gamma_{2}v=0\\
     \end{cases}$.
This implies that: $ \begin{cases}
u=0\\
v=0\\
 \end{cases}$.
So $\Gamma_{1}$ et $\Gamma_{2}$ are degenerated.\\
Since, if $\mu$ is non-zero, then $\Gamma_{1}$ and $\Gamma_{2}$ are
non-degenerate.
\end{proof}
\subsection{KV-Cohomologie on  $\mathbb{R}^{2}$}
In this section, we recall the KV-cohomology on a 2-dimensional vector space. This will enable us to define the Hessian case.\\
Let the KV-algebra $(\mathrm{I\!R}^{2}, \mu)$; the $q-th$ cohomology
space of the complex $(C_{KV}(\mu),\delta)$ is defined by:
\begin{equation}
 H_{KV}^{q}(\mu)\cong\frac{Ker \delta^{q}}{Im \delta^{q-1}}.
\end{equation}
The following lemma characterizes $Ker\delta^{q}, q= 0, 1,2$,  on
$\mathrm{I\! R}^{2}$, to define the first KV-cohomology spaces
$H_{KV}^{0}(\mu), H_{KV}^{1}(\mu), H_{KV}^{2}(\mu)$.
\begin{proposition}
For the KV-algebra $(\mathrm{I\! R}^{2}, \mu)$, we have:\\
(i)$Ker\delta^{0}$ is the set of elements $\xi=(\xi_{1},\xi_{2}) \in
J(\mathrm{I\!R}^{2})$, satisfying the following equation
Eq.~(\ref{eq2})
\begin{eqnarray}
\begin{cases}\label{eq2}
(\Gamma_{21}^{1}-\Gamma_{12}^{1})\xi_{2}=0\\
(\Gamma_{12}^{1}-\Gamma_{21}^{1})\xi_{1}=0\\
(\Gamma_{21}^{2}-\Gamma_{12}^{2})\xi_{2}=0\\
(\Gamma_{12}^{2}-\Gamma_{21}^{2})\xi_{1}=0\\
\end{cases}
\end{eqnarray}
(ii)$Ker\delta^{1}$ is the set of linear applications $f_{1}\approx
\begin{pmatrix}
          \alpha&\beta\\\
          \gamma&\lambda
            \end{pmatrix}$ of $C^{1}(\mathrm{I\!R}^{2})$ satisfying the following equation Eq.~(\ref{eq3})
\begin{eqnarray}
\begin{cases}\label{eq3}
-\Gamma_{11}^{1}\alpha+\Gamma_{11}^{2}\beta-(\Gamma_{12}^{1}+\Gamma_{21}^{1})\gamma=0\\
(\Gamma_{12}^{2}-\Gamma_{11}^{1})\beta-\Gamma_{22}^{1}\gamma-\Gamma_{12}^{1}\lambda=0\\
(\Gamma_{11}^{2}-\Gamma_{11}^{1})\beta-\Gamma_{22}^{1}\gamma-\Gamma_{21}^{1}\lambda=0\\
\Gamma_{22}^{1}\alpha+(\Gamma_{22}^{2}-\Gamma_{12}^{1}-\Gamma_{21}^{1})\beta-2\Gamma_{22}^{1}\lambda=0\\
-2\Gamma_{11}^{2}\alpha+(\Gamma_{11}^{1}-\Gamma_{12}^{2}-\Gamma_{21}^{2})\gamma+\Gamma_{11}^{2}\lambda=0\\
-\Gamma_{12}^{2}\alpha-\Gamma_{11}^{2}\beta+(\Gamma_{12}^{1}-\Gamma_{22}^{2})\gamma=0\\
-\Gamma_{21}^{2}\alpha-\Gamma_{11}^{2}\beta+(\Gamma_{21}^{1}-\Gamma_{22}^{2})\gamma=0\\
-(\Gamma_{21}^{2}+\Gamma_{12}^{2})\beta+\Gamma_{22}^{1}\gamma-\Gamma_{22}^{2}\lambda=0\\
\end{cases}
\end{eqnarray}
(iii) $Ker\delta^{2}$ is the set of bilinear applications
$f_{2}\approx \biggl(\begin{pmatrix}
          e&f
          g&h
            \end{pmatrix},
        \begin{pmatrix}
         i&j
         k&l
            \end{pmatrix}\biggr)$ of $C^{2}(\mathrm{I\!R}^{2})$ verifying the following equation Eq.~(\ref{eq4})
\begin{eqnarray}
\begin{cases} \label{eq4}
(\Gamma_{12}^{1}-\Gamma_{21}^{1})e+\Gamma_{22}^{1}i+(\Gamma_{11}^{1}-\Gamma_{21}^{2})f+\Gamma_{21}^{1}j+(\Gamma_{12}^{2}-\Gamma_{11}^{1}
-\Gamma_{21}^{2})g-(\Gamma_{12}^{1}+\Gamma_{21}^{1})k+\Gamma_{11}^{2}h=0\\
\Gamma_{21}^{2}e+(\Gamma_{12}^{1}-2\Gamma_{21}^{1}+\Gamma_{22}^{2})i+\Gamma_{11}^{2}f-2\Gamma_{11}^{2}g+(\Gamma_{11}^{1}-2\Gamma_{21}^{2})k
+\Gamma_{11}^{2}l=0\\
-\Gamma_{22}^{1}e+(2\Gamma_{12}^{1}-\Gamma_{22}^{2})f+2\Gamma_{22}^{1}j-\Gamma_{22}^{1}k+(2\Gamma_{12}^{2}-\Gamma_{11}^{1}-\Gamma_{21}^{2})h
-\Gamma_{12}^{1}l=0\\
-\Gamma_{22}^{1}i+(\Gamma_{12}^{2}+\Gamma_{21}^{2})f+(\Gamma_{12}^{1}-\Gamma_{21}^{1}+\Gamma_{22}^{2})j-\Gamma_{12}^{1}g+(\Gamma_{12}^{1}-\Gamma_{22}^{2})k
-\Gamma_{11}^{2}h+(\Gamma_{12}^{2}-\Gamma_{21}^{2})l=0\\
\end{cases}
\end{eqnarray}
\end{proposition}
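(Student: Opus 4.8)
The plan is to compute the coboundary operators $\delta^{0},\delta^{1},\delta^{2}$ directly on the regular KV-bimodule $M=\mathrm{I\!R}^{2}$, in which $(\mathrm{I\!R}^{2},\mu)$ acts on itself by $a\cdot x=\mu(a,x)$ on the left and $x\cdot a=\mu(x,a)$ on the right; this is a genuine KV-bimodule precisely because $KV_{\mu}=0$, i.e.\ because $\mu$ is a KV-structure in the sense of Proposition~1, so that the complex $(C_{KV}(\mu),\delta)$ is defined. The first step is preparatory: I specialize the coboundary formula of Section~2 to this bimodule to obtain closed expressions for $(\delta^{0}\xi)(a)$, $(\delta^{1}f_{1})(a,b)$ and $(\delta^{2}f_{2})(a,b,c)$ in terms of $\mu$ and the cochain only. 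Because $\mathrm{I\!R}^{2}$ has the basis $(e_{1},e_{2})$ with $\mu(e_{i},e_{j})=\Gamma^{1}_{ij}e_{1}+\Gamma^{2}_{ij}e_{2}$, a $q$-cochain is encoded by its $2\cdot 2^{q}$ components on $q$-tuples of basis vectors, and $\delta^{q}f=0$ then becomes a finite homogeneous linear system in those components with coefficients polynomial in the $\Gamma^{k}_{ij}$; writing these systems out for $q=0,1,2$ is exactly the assertion to be proved.

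For (i), a $0$-cochain is an element $\xi=\xi_{1}e_{1}+\xi_{2}e_{2}$ of $C^{0}(\mathrm{I\!R}^{2})=\mathrm{I\!R}^{2}$, and $\delta^{0}\xi$ is the linear map $a\mapsto\mu(a,\xi)-\mu(\xi,a)$. Evaluating at $e_{1}$ yields $\xi_{2}(\Gamma^{1}_{12}-\Gamma^{1}_{21})e_{1}+\xi_{2}(\Gamma^{2}_{12}-\Gamma^{2}_{21})e_{2}$ and at $e_{2}$ yields $\xi_{1}(\Gamma^{1}_{21}-\Gamma^{1}_{12})e_{1}+\xi_{1}(\Gamma^{2}_{21}-\Gamma^{2}_{12})e_{2}$; setting both to zero and separating the $e_{1}$- and $e_{2}$-components gives exactly the four equations of Eq.~(\ref{eq2}).

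Cases (ii) and (iii) use the same mechanism, only with more terms. I would first fix the matrix identifications once and for all — $f_{1}\approx\begin{pmatrix}\alpha&\beta\\ \gamma&\lambda\end{pmatrix}$ via $f_{1}(e_{1})=\alpha e_{1}+\gamma e_{2}$, $f_{1}(e_{2})=\beta e_{1}+\lambda e_{2}$, and $f_{2}\approx\biggl(\begin{pmatrix}e&f\\ g&h\end{pmatrix},\begin{pmatrix}i&j\\ k&l\end{pmatrix}\biggr)$ by reading the $e_{1}$- and $e_{2}$-components of $f_{2}(e_{p},e_{q})$ off the $(p,q)$ entries — and then expand $(\delta^{1}f_{1})(e_{i},e_{j})$ over the four pairs $(i,j)$ and $(\delta^{2}f_{2})(e_{i},e_{j},e_{k})$ over the eight triples, in each case substituting $\mu(e_{i},e_{j})=\sum_{k}\Gamma^{k}_{ij}e_{k}$ and collecting coefficients of $e_{1}$ and $e_{2}$. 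For $\delta^{1}$ this produces the eight relations of Eq.~(\ref{eq3}); for $\delta^{2}$ it produces sixteen a priori scalar identities which, after the dependent ones are discarded, collapse to the four relations of Eq.~(\ref{eq4}).

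The main obstacle is case (iii): the expansion of $\delta^{2}f_{2}$ is the heaviest — eight triples, each a sum of several associator-type terms that have to be regrouped — and one must check that the independent survivors are precisely the four stated equations, i.e.\ that no genuine constraint has been dropped and that the reduction from sixteen relations to four is valid for all values of the $\Gamma^{k}_{ij}$, not only generic ones. A secondary, purely clerical difficulty is keeping the sign and index conventions of the coboundary formula consistent with the matrix identifications so that Eqs.~(\ref{eq2})--(\ref{eq4}) emerge verbatim; I would pin down those conventions at the outset and apply them mechanically. Beyond this, every step is a direct computation requiring no additional idea.
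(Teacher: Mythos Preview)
Your proposal is correct and follows essentially the same route as the paper: both compute $\delta^{0},\delta^{1},\delta^{2}$ directly on the regular bimodule by expanding $\mu(e_{i},e_{j})=\Gamma^{1}_{ij}e_{1}+\Gamma^{2}_{ij}e_{2}$ and reading off the resulting linear constraints. The only cosmetic differences are that the paper carries general vectors $u=(x,y),v=(x',y'),w=(x'',y'')$ through the expansion and then matches monomial coefficients (equivalent to your evaluation on basis tuples), and that your sign convention for $\delta^{0}$ is the negative of the paper's---harmless for the kernel. Your worry about the collapse from sixteen scalar relations to four in case~(iii) is resolved in the paper simply by the visible antisymmetry of $\delta^{2}f_{2}(u,v,w)$ in its first two arguments, which kills the $(e_{i},e_{i},\ast)$ triples and pairs the remaining ones; invoking that antisymmetry up front would let you skip the bookkeeping you flag as the main obstacle.
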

\begin{proof} Consider the following KV-complex
\begin{center}
$0 \stackrel{ \delta^{-1} }{\longrightarrow} C^{0}(\mathbb{R}^{2})
\stackrel{ \delta^{0} }{\longrightarrow} C^{1}(\mathbb{R}^{2})
\stackrel{ \delta^{1} }{\longrightarrow}
C^{2}(\mathbb{R}^{2})\stackrel{ \delta^{2} }{\longrightarrow}
\ldots,$
\end{center}
where $C^{0}(\mathrm{I\! R}^{2})=J(\mathrm{I\! R}^{2})=\{\xi \in
\mathrm{I\! R}^{2}, Ass_{\mu}(u,v,\xi)=0, u, v \in \mathrm{I\!
R}^{2}\}$. We have\\
(i) $\delta^{0}\xi \mapsto \delta^{0}(\xi)\in
C^{1}(\mathrm{I\!R}^{2}),$ such that, for all element $u=(x,y)$ of
$\mathrm{I\!R}^{2}$
 \begin{eqnarray*}
\delta^{0}(\xi)(u)&=& -u\xi + \xi u ,\forall \xi=(\xi_{1},\xi_{2}) \in J(\mathbb{R}^{2})\\
                  &=& -\mu(u,\xi)+\mu(\xi,\mu)\\
                  &=& (-u^{t}\Gamma_{1}\xi+\xi^{t}\Gamma_{1}u,-u^{t}\Gamma_{2}\xi+\xi^{t}\Gamma_{2}u)\\
                  &=& \biggl((\Gamma_{21}^{1}-\Gamma_{12}^{1})x\xi_{2}+(\Gamma_{12}^{1}-\Gamma_{21}^{1})y\xi_{1},
                   (\Gamma_{21}^{2}-\Gamma_{12}^{2})x\xi_{2}+(\Gamma_{12}^{2}-\Gamma_{21}^{2})y\xi_{1}\biggr)
\end{eqnarray*}
So, $\delta^{0}(\xi)(u)= 0$ if only if $\begin{cases}
(\Gamma_{21}^{1}-\Gamma_{12}^{1})\xi_{2}=0\\
(\Gamma_{12}^{1}-\Gamma_{21}^{1})\xi_{1}=0\\
(\Gamma_{21}^{2}-\Gamma_{12}^{2})\xi_{2}=0\\
(\Gamma_{12}^{2}-\Gamma_{21}^{2})\xi_{1}=0\\
\end{cases}$.\\
    (ii)] $\delta^{1}:f_{1} \mapsto \delta^{1}f_{1}\in
C^{2}(\mathrm{I\!R}^{2}),$ such that, for any $ u=(x,y),
v=(x',y')\in \mathrm{I\!R}^{2}$, we have
\begin{eqnarray*}
\delta^{1}f_{1}(u,v)&=& -uf_{1}(v)+f_{1}(uv)-f_{1}(u)v\\
                      &=& -\mu(u,f_{1}(v))+f_{1}(\mu(u,v))-\mu(f_{1}(u),v);
\end{eqnarray*}
where $f_{1}$ linear application of $\mathrm{I\!R}^{2}$ in
$\mathrm{I\!R}^{2}$ associated at one matrix $A=\begin{pmatrix}
    \alpha& \beta\\
    \gamma&\lambda
     \end{pmatrix}$,
    in the basis $(e_{1}, e_{2})$ de $\mathrm{I\!R}^{2}$.
So, $f_{1}(u)=(\alpha x+\beta y,\gamma x+\lambda y)$,  we  have\\
$\bullet$
$uf_{1}(v)=(u^{t}\Gamma_{1}f_{1}(v),u^{t}\Gamma_{2}f_{1}(v))$, with
\begin{eqnarray*}
u^{t}\Gamma_{1}f_{1}(v)&=&(\alpha\Gamma_{11}^{1}+\gamma\Gamma_{12}^{1})xx'+(\beta\Gamma_{11}^{1}+\lambda\Gamma_{12}^{1})xy'\\
                        &+&(\alpha\Gamma_{21}^{1}+\gamma\Gamma_{22}^{1})x'y+(\beta\Gamma_{21}^{1}+\lambda\Gamma_{22}^{1})yy',
\end{eqnarray*}
\begin{eqnarray*}
u^{t}\Gamma_{2}f_{1}(v)&=&(\alpha\Gamma_{11}^{2}+\gamma\Gamma_{12}^{2})xx'+ (\beta\Gamma_{11}^{2}+\lambda\Gamma_{12}^{2})xy'\\
                       &+&(\alpha\Gamma_{21}^{2}+\gamma\Gamma_{22}^{2})x'y+(\beta\Gamma_{21}^{2}+\lambda\Gamma_{22}^{2}) yy'
\end{eqnarray*}
then  $ uf_{1}(v)=
\begin{cases}\biggl((\alpha\Gamma_{11}^{1}+\gamma\Gamma_{12}^{1})xx'+(\beta\Gamma_{11}^{1}+\lambda\Gamma_{12}^{1})xy'\\
+(\alpha\Gamma_{21}^{1}+\gamma\Gamma_{22}^{1})x'y+(\beta\Gamma_{21}^{1}+\lambda\Gamma_{22}^{1})yy',\\
(\alpha\Gamma_{11}^{2}+\gamma\Gamma_{12}^{2})xx'+ (\beta\Gamma_{11}^{2}+\lambda\Gamma_{12}^{2})xy'\\
+(\alpha\Gamma_{21}^{2}+\gamma\Gamma_{22}^{2})x'y+(\beta\Gamma_{21}^{2}+\lambda\Gamma_{22}^{2}) yy' \biggl)\\
\end{cases}$\\
$\bullet$\begin{eqnarray*}
f_{1}(uv)&=&f_{1}(\Gamma_{11}^{1}xx^{'}+\Gamma_{12}^{1}xy^{'}+\Gamma_{21}^{1}x^{'}y+\Gamma_{22}^{1}yy^{'},
        \Gamma_{11}^{2}xx^{'}+\Gamma_{12}^{2}xy^{'}+\Gamma_{21}^{2}x^{'}y+\Gamma_{22}^{2}yy^{'})\\
        &=& A .(\Gamma_{11}^{1}xx^{'}+\Gamma_{12}^{1}xy^{'}+\Gamma_{21}^{1}x^{'}y+\Gamma_{22}^{1}yy^{'},
        \Gamma_{11}^{2}xx^{'}+\Gamma_{12}^{2}xy^{'}+\Gamma_{21}^{2}x^{'}y+\Gamma_{22}^{2}yy^{'})
\end{eqnarray*}
Then $f_{1}(uv)=
\begin{cases}\biggl((\alpha\Gamma_{11}^{1}+\beta\Gamma_{11}^{2})xx'+(\alpha\Gamma_{12}^{1}+\beta\Gamma_{12}^{2})xy'\\
+(\alpha\Gamma_{21}^{1}+\beta\Gamma_{21}^{2})x'y+(\alpha\Gamma_{22}^{1}+\beta\Gamma_{22}^{2})yy',\\
(\gamma\Gamma_{11}^{1}+\lambda\Gamma_{11}^{2})xx'+(\gamma\Gamma_{12}^{1}+\lambda\Gamma_{12}^{2})xy'\\
+(\gamma\Gamma_{21}^{1}+\lambda\Gamma_{21}^{2})x'y+(\gamma\Gamma_{22}^{1}+\lambda\Gamma_{22}^{2})yy' \biggr)\\
\end{cases}$;\\
$\bullet$
$f_{1}(u)v=(f_{1}(u)^{t}\Gamma_{1}v,f_{1}(u)^{t}\Gamma_{2}v)$, with
\begin{eqnarray*}
f_{1}(u)^{t}\Gamma_{1}v &=&(\alpha\Gamma_{11}^{1}+\gamma\Gamma_{21}^{1})xx'+(\alpha\Gamma_{12}^{1}+\gamma\Gamma_{22}^{1})xy'\\
                        &+&(\beta\Gamma_{11}^{1}+\lambda\Gamma_{21}^{1})x'y+(\beta\Gamma_{12}^{1}+\lambda\Gamma_{22}^{1})yy',
\end{eqnarray*}
\begin{eqnarray*}
f_{1}(u)^{t}\Gamma_{2}v &=&(\alpha\Gamma_{11}^{2}+\gamma\Gamma_{21}^{2})xx'+ (\alpha\Gamma_{12}^{2}+\gamma\Gamma_{22}^{2})xy'\\
                        &+&(\beta\Gamma_{11}^{2}+\lambda\Gamma_{21}^{2})x'y+(\beta\Gamma_{12}^{2}+\lambda\Gamma_{22}^{2}) yy'
\end{eqnarray*}
Then $ f_{1}(u)v=
\begin{cases}\biggl((\alpha\Gamma_{11}^{1}+\gamma\Gamma_{21}^{1})xx'+(\alpha\Gamma_{12}^{1}+\gamma\Gamma_{22}^{1})xy'\\
+(\beta\Gamma_{11}^{1}+\lambda\Gamma_{21}^{1})x'y+(\beta\Gamma_{12}^{1}+\lambda\Gamma_{22}^{1})yy',\\
(\alpha\Gamma_{11}^{2}+\gamma\Gamma_{21}^{2})xx'+ (\alpha\Gamma_{12}^{2}+\gamma\Gamma_{22}^{2})xy'\\
+(\beta\Gamma_{11}^{2}+\lambda\Gamma_{21}^{2})x'y+(\beta\Gamma_{12}^{2}+\lambda\Gamma_{22}^{2}) yy'\biggr )\\
\end{cases}$.\\
We obtain the expression of $\delta^{1}f_{1}(u,v)$:\\
\begin{eqnarray}\delta^{1}f_{1}(u,v)=
\begin{cases}\biggl((-\Gamma_{11}^{1}\alpha+\Gamma_{11}^{2}\beta-(\Gamma_{12}^{1}+\Gamma_{21}^{1})\gamma)xx'+((\Gamma_{12}^{2}-\Gamma_{11}^{1})\beta\\
-\Gamma_{22}^{1}\gamma-\Gamma_{12}^{1}\lambda)xy'+((\Gamma_{11}^{2}-\Gamma_{11}^{1})\beta-\Gamma_{22}^{1}\gamma-\Gamma_{21}^{1}\lambda)x'y\\
+(\Gamma_{22}^{1}\alpha+(\Gamma_{22}^{2}-\Gamma_{12}^{1}-\Gamma_{21}^{1})\beta-2\Gamma_{22}^{1}\lambda)yy',\\
(-2\Gamma_{11}^{2}\alpha+(\Gamma_{11}^{1}-\Gamma_{12}^{2}-\Gamma_{21}^{2})\gamma+\Gamma_{11}^{2}\lambda)xx'+(-\Gamma_{12}^{2}\alpha-\Gamma_{11}^{2}\beta\\
+(\Gamma_{12}^{1}-\Gamma_{22}^{2})\gamma)xy'+(-\Gamma_{21}^{2}\alpha-\Gamma_{11}^{2}\beta+(\Gamma_{21}^{1}-\Gamma_{22}^{2})\gamma)x'y\\
+(-(\Gamma_{21}^{2}+\Gamma_{12}^{2})\beta+\Gamma_{22}^{1}\gamma-\Gamma_{22}^{2}\lambda)yy' \biggr)\\
\end{cases}.
\end{eqnarray}
Then
\begin{eqnarray*}
f_{1}\in Ker\delta^{1} &\Longleftrightarrow & \delta^{1}f_{1}(u,v)=0\\
                       &\Longleftrightarrow & \delta^{1}f_{1}(e_{1},e_{j})=0, 1\leq i, j \leq 2 \\
                       &\Longleftrightarrow & \begin{cases}
-\Gamma_{11}^{1}\alpha+\Gamma_{11}^{2}\beta-(\Gamma_{12}^{1}+\Gamma_{21}^{1})\gamma=0\\
(\Gamma_{12}^{2}-\Gamma_{11}^{1})\beta-\Gamma_{22}^{1}\gamma-\Gamma_{12}^{1}\lambda=0\\
(\Gamma_{11}^{2}-\Gamma_{11}^{1})\beta-\Gamma_{22}^{1}\gamma-\Gamma_{21}^{1}\lambda=0\\
\Gamma_{22}^{1}\alpha+(\Gamma_{22}^{2}-\Gamma_{12}^{1}-\Gamma_{21}^{1})\beta-2\Gamma_{22}^{1}\lambda=0\\
-2\Gamma_{11}^{2}\alpha+(\Gamma_{11}^{1}-\Gamma_{12}^{2}-\Gamma_{21}^{2})\gamma+\Gamma_{11}^{2}\lambda=0\\
-\Gamma_{12}^{2}\alpha-\Gamma_{11}^{2}\beta+(\Gamma_{12}^{1}-\Gamma_{22}^{2})\gamma=0\\
-\Gamma_{21}^{2}\alpha-\Gamma_{11}^{2}\beta+(\Gamma_{21}^{1}-\Gamma_{22}^{2})\gamma=0\\
-(\Gamma_{21}^{2}+\Gamma_{12}^{2})\beta+\Gamma_{22}^{1}\gamma-\Gamma_{22}^{2}\lambda=0\\
\end{cases}
\end{eqnarray*}\\
    (iii) $\delta^{2}:f_{2} \mapsto \delta^{2}f_{2}\in
C^{3}(\mathrm{I\!R}^{2}),$ such that, for all $u=(x,y), v=(x',y'),
w=(x'',y'')$ element of $\mathrm{I\!R}^{2}$, we have
\begin{eqnarray*}
\delta^{2}f_{2}(u,v,w)&=& vf_{2}(u,w)-uf_{2}(v,w)+f_{2}(v,uw)-f_{2}(u,vw)+f_{2}(uv,w)-f_{2}(vu,w)\\
                      &+& f_{2}(u,v)w-f_{2}(v,u)w\\
                      &=& \mu(v,f_{2}(u,w))-\mu(u,f_{2}(v,w))+f_{2}(v,\mu(u,w))-f_{2}(u,\mu(v,w))+f_{2}(\mu(u,v),w)\\
                      &-& f_{2}(\mu(v,u),w)+ \mu(f_{2}(u,v),w)-\mu(f_{2}(v,u),w)
\end{eqnarray*}
where $f_{2}$ is a bilinear application  of $\mathrm{I\! R}^{2}
\times \mathrm{I\!R}^{2}$ in $\mathrm{I\!R}^{2}$, whose associated
with matrix:
\begin{eqnarray*}
     C= \begin{pmatrix}
          e&f\\
           g&h
            \end{pmatrix},
       D= \begin{pmatrix}
          i&j\\
           k&l
            \end{pmatrix}.
\end{eqnarray*}
So,  $f_{2} (u,v)=(u^{t}Cv, u^{t}Dv)= (exx'+fxy'+gx'y+hyy',
ixx'+jxy'+kx'y+lyy')$; we have\\
 $\bullet$
$vf_{2}(u,w)=(v^{t}\Gamma_{1}f_{2}(u,w),v^{t}\Gamma_{2}f_{2}(u,w)),$
with
\begin{eqnarray*}
 v^{t}\Gamma_{1}f_{2}(u,w)&=&(e\Gamma_{11}^{1}+i\Gamma_{12}^{1})xx'x''+(f\Gamma_{11}^{1}+j\Gamma_{12}^{1})xx'y''+(g\Gamma_{11}^{1}+k\Gamma_{12}^{1})x'x''y\\
                          &+&(h\Gamma_{11}^{1}+l\Gamma_{12}^{1})x'yy''+(e\Gamma_{21}^{1}+i\Gamma_{22}^{1})xx''y'+(f\Gamma_{21}^{1}+j\Gamma_{22}^{1})xy'y''\\
                          &+&(g\Gamma_{21}^{1}+k\Gamma_{22}^{1})x''yy'+(h\Gamma_{21}^{1}+l\Gamma_{22}^{1})yy'y'',
\end{eqnarray*}
\begin{eqnarray*}
v^{t}\Gamma_{2}f_{2}(u,w)&=&(e\Gamma_{11}^{2}+i\Gamma_{12}^{2})xx'x''+(f\Gamma_{11}^{2}+j\Gamma_{12}^{2})xx'y''+(g\Gamma_{11}^{2}+k\Gamma_{12}^{2})x'x''y\\
                         &+&(h\Gamma_{11}^{2}+l\Gamma_{12}^{2})x'yy''+(e\Gamma_{21}^{2}+i\Gamma_{22}^{2})xx''y'+(f\Gamma_{21}^{2}+j\Gamma_{22}^{2})xy'y''\\
                         &+&(g\Gamma_{}^{1}+k\Gamma_{22}^{2})x''yy'+(h\Gamma_{21}^{2}+l\Gamma_{22}^{2})yy'y''
\end{eqnarray*}
Then \begin{eqnarray*}vf_{2}(u,w) =
\begin{cases}\biggl((e\Gamma_{11}^{1}+i\Gamma_{12}^{1})xx'x''+(f\Gamma_{11}^{1}+j\Gamma_{12}^{1})xx'y''+(g\Gamma_{11}^{1}+k\Gamma_{12}^{1})x'x''y\\
+(h\Gamma_{11}^{1}+l\Gamma_{12}^{1})x'yy''+(e\Gamma_{21}^{1}+i\Gamma_{22}^{1})xx''y'+(f\Gamma_{21}^{1}+j\Gamma_{22}^{1})xy'y''\\
+(g\Gamma_{21}^{1}+k\Gamma_{22}^{1})x''yy'+(h\Gamma_{21}^{1}+l\Gamma_{22}^{1})yy'y'',\\
(e\Gamma_{11}^{2}+i\Gamma_{12}^{2})xx'x''+(f\Gamma_{11}^{2}+j\Gamma_{12}^{2})xx'y''+(g\Gamma_{11}^{2}+k\Gamma_{12}^{2})x'x''y\\
+(h\Gamma_{11}^{2}+l\Gamma_{12}^{2})x'yy''+(e\Gamma_{21}^{2}+i\Gamma_{22}^{2})xx''y'+(f\Gamma_{21}^{2}+j\Gamma_{22}^{2})xy'y''\\
+(g\Gamma_{}^{1}+k\Gamma_{22}^{2})x''yy'+(h\Gamma_{21}^{2}+l\Gamma_{22}^{2})yy'y'' \biggr)\\
\end{cases}
\end{eqnarray*}\\
    $\bullet$
$uf_{2}(v,w)=(u^{t}\Gamma_{1}f_{2}(v,w),u^{t}\Gamma_{2}f_{2}(v,w))$,
with
\begin{eqnarray*}
u^{t}\Gamma_{1}f_{2}(v,w)&=&(e\Gamma_{11}^{1}+i\Gamma_{12}^{1})xx'x''+(f\Gamma_{11}^{1}+j\Gamma_{12}^{1})xx'y''+(g\Gamma_{11}^{1}+k\Gamma_{12}^{1})xx''y'\\
                         &+&(h\Gamma_{11}^{1}+l\Gamma_{12}^{1})xy'y''+(e\Gamma_{21}^{1}+i\Gamma_{22}^{1})x'x''y+(f\Gamma_{21}^{1}+j\Gamma_{22}^{1})x'yy''\\
                         &+&(g\Gamma_{21}^{1}+k\Gamma_{22}^{1})x''yy'+(h\Gamma_{21}^{1}+l\Gamma_{22}^{1})yy'y'',
\end{eqnarray*}
\begin{eqnarray*}
u^{t}\Gamma_{2}f_{2}(v,w)&=&(e\Gamma_{11}^{2}+i\Gamma_{12}^{2})xx'x''+(f\Gamma_{11}^{2}+j\Gamma_{12}^{2})xx'y''+(g\Gamma_{11}^{2}+k\Gamma_{12}^{2})xx''y'\\
                         &+&(h\Gamma_{11}^{2}+l\Gamma_{12}^{2})xy'y''+(e\Gamma_{21}^{2}+i\Gamma_{22}^{2})x'x''y+(f\Gamma_{21}^{2}+j\Gamma_{22}^{2})x'yy''\\
                         &+&(g\Gamma_{21}^{2}+k\Gamma_{22}^{2})x''yy'+(h\Gamma_{21}^{2}+l\Gamma_{22}^{2})yy'y''
\end{eqnarray*}
Then \begin{eqnarray*}uf_{2}(v,w) =
\begin{cases}\biggl((e\Gamma_{11}^{1}+i\Gamma_{12}^{1})xx'x''+(f\Gamma_{11}^{1}+j\Gamma_{12}^{1})xx'y''+(g\Gamma_{11}^{1}+k\Gamma_{12}^{1})xx''y'\\
+(h\Gamma_{11}^{1}+l\Gamma_{12}^{1})xy'y''+(e\Gamma_{21}^{1}+i\Gamma_{22}^{1})x'x''y+(f\Gamma_{21}^{1}+j\Gamma_{22}^{1})x'yy''\\
+(g\Gamma_{21}^{1}+k\Gamma_{22}^{1})x''yy'+(h\Gamma_{21}^{1}+l\Gamma_{22}^{1})yy'y'',\\
(e\Gamma_{11}^{2}+i\Gamma_{12}^{2})xx'x''+(f\Gamma_{11}^{2}+j\Gamma_{12}^{2})xx'y''+(g\Gamma_{11}^{2}+k\Gamma_{12}^{2})xx''y'\\
+(h\Gamma_{11}^{2}+l\Gamma_{12}^{2})xy'y''+(e\Gamma_{21}^{2}+i\Gamma_{22}^{2})x'x''y+(f\Gamma_{21}^{2}+j\Gamma_{22}^{2})x'yy''\\
+(g\Gamma_{21}^{2}+k\Gamma_{22}^{2})x''yy'+(h\Gamma_{21}^{2}+l\Gamma_{22}^{2})yy'y'' \biggr)\\
\end{cases}
\end{eqnarray*}\\
$\bullet$ $f_{2}(v,uw)=(v^{t}C(uw),v^{t}D(uw))$, with
\begin{eqnarray*}
v^{t}C(uw) &=&(e\Gamma_{11}^{1}+f\Gamma_{11}^{2})xx'x''+(e\Gamma_{12}^{1}+f\Gamma_{12}^{2})xx'y''+(e\Gamma_{21}^{1}+f\Gamma_{21}^{2})x'x''y\\
           &+&(e\Gamma_{22}^{1}+f\Gamma_{22}^{2})x'yy''+(g\Gamma_{11}^{1}+h\Gamma_{11}^{2})xy'x''+(g\Gamma_{12}^{1}+h\Gamma_{12}^{2})xy'y''\\
           &+&(g\Gamma_{21}^{1}+h\Gamma_{21}^{2})x''yy'+(g\Gamma_{22}^{1}+l\Gamma_{22}^{2})yy'y'',
\end{eqnarray*}
\begin{eqnarray*}
v^{t}D(uw) &=&(i\Gamma_{11}^{1}+j\Gamma_{11}^{2})xx'x''+(i\Gamma_{12}^{1}+j\Gamma_{12}^{2})xx'y''+(i\Gamma_{21}^{1}+j\Gamma_{21}^{2})x'x''y\\
           &+&(i\Gamma_{22}^{1}+j\Gamma_{22}^{2})x'yy''+(k\Gamma_{11}^{1}+l\Gamma_{11}^{2})xy'x''+(k\Gamma_{12}^{1}+l\Gamma_{12}^{2})xy'y''\\
           &+&(k\Gamma_{21}^{1}+l\Gamma_{21}^{2})x''yy'+(k\Gamma_{22}^{1}+l\Gamma_{22}^{2})yy'y''
\end{eqnarray*}
Then \begin{eqnarray*}f_{2}(v,uw) =
\begin{cases}\biggl((e\Gamma_{11}^{1}+f\Gamma_{11}^{2})xx'x''+(e\Gamma_{12}^{1}+f\Gamma_{12}^{2})xx'y''+(e\Gamma_{21}^{1}+f\Gamma_{21}^{2})x'x''y\\
+(e\Gamma_{22}^{1}+f\Gamma_{22}^{2})x'yy''+(g\Gamma_{11}^{1}+h\Gamma_{11}^{2})xy'x''+(g\Gamma_{12}^{1}+h\Gamma_{12}^{2})xy'y''\\
+(g\Gamma_{21}^{1}+h\Gamma_{21}^{2})x''yy'+(g\Gamma_{22}^{1}+l\Gamma_{22}^{2})yy'y'',\\
(i\Gamma_{11}^{1}+j\Gamma_{11}^{2})xx'x''+(i\Gamma_{12}^{1}+j\Gamma_{12}^{2})xx'y''+(i\Gamma_{21}^{1}+j\Gamma_{21}^{2})x'x''y\\
+(i\Gamma_{22}^{1}+j\Gamma_{22}^{2})x'yy''+(k\Gamma_{11}^{1}+l\Gamma_{11}^{2})xy'x''+(k\Gamma_{12}^{1}+l\Gamma_{12}^{2})xy'y''\\
+(k\Gamma_{21}^{1}+l\Gamma_{21}^{2})x''yy'+(k\Gamma_{22}^{1}+l\Gamma_{22}^{2})yy'y'' \biggr)\\
\end{cases}
\end{eqnarray*}\\
$\bullet$ $f_{2}(u,vw)=(u^{t}C(vw),u^{t}D(vw))$, with
\begin{eqnarray*}
u^{t}C(vw) &=&(e\Gamma_{11}^{1}+f\Gamma_{11}^{2})xx'x''+(e\Gamma_{12}^{1}+f\Gamma_{12}^{2})xx'y''+(e\Gamma_{21}^{1}+f\Gamma_{21}^{2})xx''y'\\
           &+&(e\Gamma_{22}^{1}+f\Gamma_{22}^{2})xy'y''+(g\Gamma_{11}^{1}+h\Gamma_{11}^{2})x'yx''+(g\Gamma_{12}^{1}+h\Gamma_{12}^{2})x'yy''\\
           &+&(g\Gamma_{21}^{1}+h\Gamma_{21}^{2})x''yy'+(g\Gamma_{22}^{1}+h\Gamma_{22}^{2})yy'y'',
\end{eqnarray*}
\begin{eqnarray*}
u^{t}D(vw) &=&(i\Gamma_{11}^{1}+j\Gamma_{11}^{2})xx'x''+(i\Gamma_{12}^{1}+j\Gamma_{12}^{2})xx'y''+(i\Gamma_{21}^{1}+j\Gamma_{21}^{2})xx''y'\\
           &+&(i\Gamma_{22}^{1}+j\Gamma_{22}^{2})xy'y''+(k\Gamma_{11}^{1}+l\Gamma_{11}^{2})x'yx''+(k\Gamma_{12}^{1}+l\Gamma_{12}^{2})x'yy''\\
           &+&(k\Gamma_{21}^{1}+l\Gamma_{21}^{2})x''yy'+(k\Gamma_{22}^{1}+l\Gamma_{22}^{2})yy'y''
\end{eqnarray*}
Then \begin{eqnarray*}f_{2}(u,vw) =
\begin{cases}\biggl((e\Gamma_{11}^{1}+f\Gamma_{11}^{2})xx'x''+(e\Gamma_{12}^{1}+f\Gamma_{12}^{2})xx'y''+(e\Gamma_{21}^{1}+f\Gamma_{21}^{2})xx''y'\\
+(e\Gamma_{22}^{1}+f\Gamma_{22}^{2})xy'y''+(g\Gamma_{11}^{1}+h\Gamma_{11}^{2})x'yx''+(g\Gamma_{12}^{1}+h\Gamma_{12}^{2})x'yy''\\
+(g\Gamma_{21}^{1}+h\Gamma_{21}^{2})x''yy'+(g\Gamma_{22}^{1}+h\Gamma_{22}^{2})yy'y'',\\
(i\Gamma_{11}^{1}+j\Gamma_{11}^{2})xx'x''+(i\Gamma_{12}^{1}+j\Gamma_{12}^{2})xx'y''+(i\Gamma_{21}^{1}+j\Gamma_{21}^{2})xx''y'\\
+(i\Gamma_{22}^{1}+j\Gamma_{22}^{2})xy'y''+(k\Gamma_{11}^{1}+l\Gamma_{11}^{2})x'yx''+(k\Gamma_{12}^{1}+l\Gamma_{12}^{2})x'yy''\\
+(k\Gamma_{21}^{1}+l\Gamma_{21}^{2})x''yy'+(k\Gamma_{22}^{1}+l\Gamma_{22}^{2})yy'y'' \biggr)\\
\end{cases}
\end{eqnarray*}\\
$\bullet$ $f_{2}(uv,w)=((uv)^{t}Cw,(uv)^{t}Dw)$, with
\begin{eqnarray*}
(uv)^{t}Cw &=&(e\Gamma_{11}^{1}+f\Gamma_{11}^{2})xx'x''+(e\Gamma_{12}^{1}+f\Gamma_{12}^{2})xx''y'+(e\Gamma_{21}^{1}+f\Gamma_{21}^{2})x'x''y\\
           &+&(e\Gamma_{22}^{1}+f\Gamma_{22}^{2})x''yy'+(g\Gamma_{11}^{1}+h\Gamma_{11}^{2})xy''x'+(g\Gamma_{12}^{1}+h\Gamma_{12}^{2})xy'y''\\
           &+&(g\Gamma_{21}^{1}+h\Gamma_{21}^{2})x'yy''+(g\Gamma_{22}^{1}+h\Gamma_{22}^{2})yy'y'',
\end{eqnarray*}
\begin{eqnarray*}
(uv)^{t}Dw &=&(i\Gamma_{11}^{1}+j\Gamma_{11}^{2})xx'x''+(i\Gamma_{12}^{1}+j\Gamma_{12}^{2})xx''y'+(i\Gamma_{21}^{1}+j\Gamma_{21}^{2})x'x''y\\
           &+&(i\Gamma_{22}^{1}+j\Gamma_{22}^{2})x''yy'+(k\Gamma_{11}^{1}+l\Gamma_{11}^{2})xy''x'+(k\Gamma_{12}^{1}+l\Gamma_{12}^{2})xy'y''\\
           &+&(k\Gamma_{21}^{1}+l\Gamma_{21}^{2})x'yy''+(k\Gamma_{22}^{1}+l\Gamma_{22}^{2})yy'y''
\end{eqnarray*}
Then  \begin{eqnarray*}f_{2}(uv,w) =
\begin{cases}\biggl((e\Gamma_{11}^{1}+f\Gamma_{11}^{2})xx'x''+(e\Gamma_{12}^{1}+f\Gamma_{12}^{2})xx''y'+(e\Gamma_{21}^{1}+f\Gamma_{21}^{2})x'x''y\\
+(e\Gamma_{22}^{1}+f\Gamma_{22}^{2})x''yy'+(g\Gamma_{11}^{1}+h\Gamma_{11}^{2})xy''x'+(g\Gamma_{12}^{1}+h\Gamma_{12}^{2})xy'y''\\
+(g\Gamma_{21}^{1}+h\Gamma_{21}^{2})x'yy''+(g\Gamma_{22}^{1}+h\Gamma_{22}^{2})yy'y'',\\
(i\Gamma_{11}^{1}+j\Gamma_{11}^{2})xx'x''+(i\Gamma_{12}^{1}+j\Gamma_{12}^{2})xx''y'+(i\Gamma_{21}^{1}+j\Gamma_{21}^{2})x'x''y\\
+(i\Gamma_{22}^{1}+j\Gamma_{22}^{2})x''yy'+(k\Gamma_{11}^{1}+l\Gamma_{11}^{2})xy''x'+(k\Gamma_{12}^{1}+l\Gamma_{12}^{2})xy'y''\\
+(k\Gamma_{21}^{1}+l\Gamma_{21}^{2})x'yy''+(k\Gamma_{22}^{1}+l\Gamma_{22}^{2})yy'y'' \biggr)\\
\end{cases}
\end{eqnarray*}\\
$\bullet$ $f_{2}(vu,w)=((vu)^{t}Cw,(vu)^{t}Dw)$, with
\begin{eqnarray*}
(vu)^{t}Cw &=&(e\Gamma_{11}^{1}+g\Gamma_{11}^{2})xx'x''+(e\Gamma_{12}^{1}+g\Gamma_{12}^{2})x'x''y+(e\Gamma_{21}^{1}+g\Gamma_{21}^{2})xx''y'\\
           &+&(e\Gamma_{22}^{1}+g\Gamma_{22}^{2})x''yy'+(f\Gamma_{11}^{1}+h\Gamma_{11}^{2})xy''x'+(f\Gamma_{12}^{1}+h\Gamma_{12}^{2})x'yy''\\
           &+&(f\Gamma_{21}^{1}+h\Gamma_{21}^{2})xy'y''+(f\Gamma_{22}^{1}+h\Gamma_{22}^{2})yy'y'',
\end{eqnarray*}
\begin{eqnarray*}
(vu)^{t}Dw &=&(i\Gamma_{11}^{1}+k\Gamma_{11}^{2})xx'x''+(i\Gamma_{12}^{1}+k\Gamma_{12}^{2})x'x''y+(i\Gamma_{21}^{1}+k\Gamma_{21}^{2})xx''y'\\
           &+&(i\Gamma_{22}^{1}+k\Gamma_{22}^{2})x''yy'+(j\Gamma_{11}^{1}+l\Gamma_{11}^{2})xy''x'+(j\Gamma_{12}^{1}+k\Gamma_{12}^{2})x'yy''\\
           &+&(j\Gamma_{21}^{1}+k\Gamma_{21}^{2})xy'y''+(j\Gamma_{22}^{1}+k\Gamma_{22}^{2})yy'y''
\end{eqnarray*}
Then  \begin{eqnarray*}f_{2}(vu,w) =
\begin{cases}\biggl((e\Gamma_{11}^{1}+g\Gamma_{11}^{2})xx'x''+(e\Gamma_{12}^{1}+g\Gamma_{12}^{2})x'x''y+(e\Gamma_{21}^{1}+g\Gamma_{21}^{2})xx''y'\\
+(e\Gamma_{22}^{1}+g\Gamma_{22}^{2})x''yy'+(f\Gamma_{11}^{1}+h\Gamma_{11}^{2})xy''x'+(f\Gamma_{12}^{1}+h\Gamma_{12}^{2})x'yy''\\
+(f\Gamma_{21}^{1}+h\Gamma_{21}^{2})xy'y''+(f\Gamma_{22}^{1}+h\Gamma_{22}^{2})yy'y'',\\
(i\Gamma_{11}^{1}+k\Gamma_{11}^{2})xx'x''+(i\Gamma_{12}^{1}+k\Gamma_{12}^{2})x'x''y+(i\Gamma_{21}^{1}+k\Gamma_{21}^{2})xx''y'\\
+(i\Gamma_{22}^{1}+k\Gamma_{22}^{2})x''yy'+(j\Gamma_{11}^{1}+l\Gamma_{11}^{2})xy''x'+(j\Gamma_{12}^{1}+k\Gamma_{12}^{2})x'yy''\\
+(j\Gamma_{21}^{1}+k\Gamma_{21}^{2})xy'y''+(j\Gamma_{22}^{1}+k\Gamma_{22}^{2})yy'y''\biggr)\\
\end{cases}
\end{eqnarray*}
$\bullet$
$f_{2}(u,v)w=(f_{2}(u,v)^{t}\Gamma_{1}w,f_{2}(u,v)^{t}\Gamma_{2}w)$,
with
\begin{eqnarray*}
f_{2}(u,v)^{t}\Gamma_{1}w&=&(e\Gamma_{11}^{1}+i\Gamma_{21}^{1})xx'x''+(f\Gamma_{11}^{1}+j\Gamma_{21}^{1})xx''y'+(g\Gamma_{11}^{1}+k\Gamma_{21}^{1})x'x''y\\
                 &+&(h\Gamma_{11}^{1}+l\Gamma_{21}^{1})x''yy'+(e\Gamma_{12}^{1}+i\Gamma_{22}^{1})xy''x'+(f\Gamma_{12}^{1}+j\Gamma_{22}^{1})xy'y''\\
                 &+&(g\Gamma_{12}^{1}+k\Gamma_{22}^{1})x'yy''+(h\Gamma_{12}^{1}+l\Gamma_{22}^{1})yy'y'',
\end{eqnarray*}
\begin{eqnarray*}
f_{2}(u,v)^{t}\Gamma_{2}w&=&(e\Gamma_{11}^{2}+i\Gamma_{21}^{2})xx'x''+(f\Gamma_{11}^{2}+j\Gamma_{21}^{2})xx''y'+(g\Gamma_{11}^{2}+k\Gamma_{21}^{2})x'x''y\\
                 &+&(h\Gamma_{11}^{2}+l\Gamma_{21}^{2})x''yy'+(e\Gamma_{12}^{2}+i\Gamma_{22}^{2})xy''x'+(f\Gamma_{12}^{2}+J\Gamma_{22}^{2})xy'y''\\
                 &+&(g\Gamma_{12}^{2}+k\Gamma_{22}^{2})x'yy''+(h\Gamma_{12}^{2}+l\Gamma_{22}^{2})yy'y''
\end{eqnarray*}
Then \begin{eqnarray*}f_{2}(u,v)w =
\begin{cases}\biggl((e\Gamma_{11}^{1}+i\Gamma_{21}^{1})xx'x''+(f\Gamma_{11}^{1}+j\Gamma_{21}^{1})xx''y'+(g\Gamma_{11}^{1}+k\Gamma_{21}^{1})x'x''y\\
+(h\Gamma_{11}^{1}+l\Gamma_{21}^{1})x''yy'+(e\Gamma_{12}^{1}+i\Gamma_{22}^{1})xy''x'+(f\Gamma_{12}^{1}+j\Gamma_{22}^{1})xy'y''\\
+(g\Gamma_{12}^{1}+k\Gamma_{22}^{1})x'yy''+(h\Gamma_{12}^{1}+l\Gamma_{22}^{1})yy'y'',\\
(e\Gamma_{11}^{2}+i\Gamma_{21}^{2})xx'x''+(f\Gamma_{11}^{2}+j\Gamma_{21}^{2})xx''y'+(g\Gamma_{11}^{2}+k\Gamma_{21}^{2})x'x''y\\
+(h\Gamma_{11}^{2}+l\Gamma_{21}^{2})x''yy'+(e\Gamma_{12}^{2}+i\Gamma_{22}^{2})xy''x'+(f\Gamma_{12}^{2}+J\Gamma_{22}^{2})xy'y''\\
+(g\Gamma_{12}^{2}+k\Gamma_{22}^{2})x'yy''+(h\Gamma_{12}^{2}+l\Gamma_{22}^{2})yy'y'' \biggr)\\
\end{cases}
\end{eqnarray*}\\
$\bullet$
$f_{2}(v,u)w=(f_{2}(v,u)^{t}\Gamma_{1}w,f_{2}(v,u)^{t}\Gamma_{2}w)$,
with,
\begin{eqnarray*}
f_{2}(v,u)^{t}\Gamma_{1}w&=&(e\Gamma_{11}^{1}+i\Gamma_{21}^{1})xx'x''+(f\Gamma_{11}^{1}+j\Gamma_{21}^{1})x'x''y+(g\Gamma_{11}^{1}+k\Gamma_{21}^{1})xx''y'\\
                         &+&(h\Gamma_{11}^{1}+l\Gamma_{21}^{1})x''yy'+(e\Gamma_{12}^{1}+i\Gamma_{22}^{1})xy''x'+(f\Gamma_{12}^{1}+j\Gamma_{22}^{1})x'yy''\\
                         &+&(g\Gamma_{12}^{1}+k\Gamma_{22}^{1})xy'y''+(h\Gamma_{12}^{1}+l\Gamma_{22}^{1})yy'y'',
\end{eqnarray*}
\begin{eqnarray*}
f_{2}(v,u)^{t}\Gamma_{2}w&=&(e\Gamma_{11}^{2}+i\Gamma_{21}^{2})xx'x''+(f\Gamma_{11}^{2}+j\Gamma_{21}^{2})x'x''y+(g\Gamma_{11}^{2}+k\Gamma_{21}^{2})xx''y'\\
                         &+&(h\Gamma_{11}^{2}+l\Gamma_{21}^{2})x''yy'+(e\Gamma_{12}^{2}+i\Gamma_{22}^{2})xy''x'+(f\Gamma_{12}^{2}+j\Gamma_{22}^{2})x'yy''\\
                         &+&(g\Gamma_{12}^{2}+k\Gamma_{22}^{2})xy'y''+(h\Gamma_{12}^{2}+l\Gamma_{22}^{2})yy'y''
\end{eqnarray*}
Then  \begin{eqnarray*}f_{2}(v,u)w =
\begin{cases}\biggl((e\Gamma_{11}^{1}+i\Gamma_{21}^{1})xx'x''+(f\Gamma_{11}^{1}+j\Gamma_{21}^{1})x'x''y+(g\Gamma_{11}^{1}+k\Gamma_{21}^{1})xx''y'\\
+(h\Gamma_{11}^{1}+l\Gamma_{21}^{1})x''yy'+(e\Gamma_{12}^{1}+i\Gamma_{22}^{1})xy''x'+(f\Gamma_{12}^{1}+j\Gamma_{22}^{1})x'yy''\\
+(g\Gamma_{12}^{1}+k\Gamma_{22}^{1})xy'y''+(h\Gamma_{12}^{1}+l\Gamma_{22}^{1})yy'y'',\\
(e\Gamma_{11}^{2}+i\Gamma_{21}^{2})xx'x''+(f\Gamma_{11}^{2}+j\Gamma_{21}^{2})x'x''y+(g\Gamma_{11}^{2}+k\Gamma_{21}^{2})xx''y'\\
+(h\Gamma_{11}^{2}+l\Gamma_{21}^{2})x''yy'+(e\Gamma_{12}^{2}+i\Gamma_{22}^{2})xy''x'+(f\Gamma_{12}^{2}+j\Gamma_{22}^{2})x'yy''\\
+(g\Gamma_{12}^{2}+k\Gamma_{22}^{2})xy'y''+(h\Gamma_{12}^{2}+l\Gamma_{22}^{2})yy'y'' \biggr).\\
\end{cases}
\end{eqnarray*}
We have the expression $\delta^{2}f_{2}(u,v,w)$:\\
\begin{eqnarray}\delta^{2}f_{2}(u,v,w)=
\begin{cases}\biggl(((\Gamma_{12}^{1}-\Gamma_{21}^{1})e+\Gamma_{22}^{1}i+(\Gamma_{11}^{1}-\Gamma_{21}^{2})f+\Gamma_{21}^{1}j+(\Gamma_{12}^{2}\\
-\Gamma_{11}^{1}-\Gamma_{21}^{2})g-(\Gamma_{12}^{1}+\Gamma_{21}^{1})k+\Gamma_{11}^{2}h)x'x''y\\
-(((\Gamma_{12}^{1}-\Gamma_{21}^{1})e+\Gamma_{22}^{1}i+(\Gamma_{11}^{1}-\Gamma_{21}^{2})f+\Gamma_{21}^{1}j+(\Gamma_{12}^{2}-\Gamma_{11}^{1}\\
-\Gamma_{21}^{2})g-(\Gamma_{12}^{1}+\Gamma_{21}^{1})k+\Gamma_{11}^{2}h)xx''y'\\
+ (\Gamma_{21}^{2}e+(\Gamma_{12}^{1}-2\Gamma_{21}^{1}+\Gamma_{22}^{2})i+\Gamma_{11}^{2}f-2\Gamma_{11}^{2}g+(\Gamma_{11}^{1}-2\Gamma_{21}^{2})k\\
+\Gamma_{11}^{2}l)x'yy''\\
-(\Gamma_{21}^{2}e+(\Gamma_{12}^{1}-2\Gamma_{21}^{1}+\Gamma_{22}^{2})i+\Gamma_{11}^{2}f-2\Gamma_{11}^{2}g+(\Gamma_{11}^{1}-2\Gamma_{21}^{2})k\\
+\Gamma_{11}^{2}l)xy'y'',\\
(-\Gamma_{22}^{1}e+(2\Gamma_{12}^{1}-\Gamma_{22}^{2})f+2\Gamma_{22}^{1}j-\Gamma_{22}^{1}k+(2\Gamma_{12}^{2}-\Gamma_{11}^{1}-\Gamma_{21}^{2})h\\
-\Gamma_{12}^{1}l)x'x''y\\
-(-\Gamma_{22}^{1}e+(2\Gamma_{12}^{1}-\Gamma_{22}^{2})f+2\Gamma_{22}^{1}j-\Gamma_{22}^{1}k+(2\Gamma_{12}^{2}-\Gamma_{11}^{1}-\Gamma_{21}^{2})h\\
-\Gamma_{12}^{1}l)xx''y'\\
+(-\Gamma_{22}^{1}i+(\Gamma_{12}^{2}+\Gamma_{21}^{2})f+(\Gamma_{12}^{1}-\Gamma_{21}^{1}+\Gamma_{22}^{2})j-\Gamma_{12}^{1}g\\
+(\Gamma_{12}^{1}-\Gamma_{22}^{2})k-\Gamma_{11}^{2}h+(\Gamma_{12}^{2}-\Gamma_{21}^{2})l)x'yy''\\
-(-\Gamma_{22}^{1}i+(\Gamma_{12}^{2}+\Gamma_{21}^{2})f+(\Gamma_{12}^{1}-\Gamma_{21}^{1}+\Gamma_{22}^{2})j-\Gamma_{12}^{1}g\\
+(\Gamma_{12}^{1}-\Gamma_{22}^{2})k-\Gamma_{11}^{2}h+(\Gamma_{12}^{2}-\Gamma_{21}^{2})l)xy'y'')\biggr)\\
\end{cases}
\end{eqnarray}
So, we  have
\begin{eqnarray*}
f_{2}\in Ker\delta^{2} &\Longleftrightarrow & \delta^{2}f_{2}(u,v,w)=0\\
                       &\Longleftrightarrow & \delta^{2}f_{2}(e_{i},e_{j},e_{k})=0, 1\leq i, j, k,\leq 2
\end{eqnarray*}
equal to
\begin{eqnarray}
\begin{cases}
((\Gamma_{12}^{1}-\Gamma_{21}^{1})e+\Gamma_{22}^{1}i+(\Gamma_{11}^{1}-\Gamma_{21}^{2})f+\Gamma_{21}^{1}j+(\Gamma_{12}^{2}-\Gamma_{11}^{1}
-\Gamma_{21}^{2})g-(\Gamma_{12}^{1}+\Gamma_{21}^{1})k+\Gamma_{11}^{2}h=0\\
\Gamma_{21}^{2}e+(\Gamma_{12}^{1}-2\Gamma_{21}^{1}+\Gamma_{22}^{2})i+\Gamma_{11}^{2}f-2\Gamma_{11}^{2}g+(\Gamma_{11}^{1}-2\Gamma_{21}^{2})k
+\Gamma_{11}^{2}l)=0\\
-\Gamma_{22}^{1}e+(2\Gamma_{12}^{1}-\Gamma_{22}^{2})f+2\Gamma_{22}^{1}j-\Gamma_{22}^{1}k+(2\Gamma_{12}^{2}-\Gamma_{11}^{1}-\Gamma_{21}^{2})h
-\Gamma_{12}^{1}l=0\\
-\Gamma_{22}^{1}i+(\Gamma_{12}^{2}+\Gamma_{21}^{2})f+(\Gamma_{12}^{1}-\Gamma_{21}^{1}+\Gamma_{22}^{2})j-\Gamma_{12}^{1}g+(\Gamma_{12}^{1}-\Gamma_{22}^{2})k
-\Gamma_{11}^{2}h+(\Gamma_{12}^{2}-\Gamma_{21}^{2})l=0\\
\end{cases}
\end{eqnarray}
\end{proof}

The following lemma characterizes $Im\delta^{q-1}, q= 0, 1,2$,  on
$\mathrm{I\!R}^{2}$.
\begin{theorem}
For the KV-algebra $(\mathrm{I\!R}^{2}, \mu)$, we have:\\
(i)$Im\delta^{-1}=\{0\}$ \\
(ii)$Im\delta^{0}$ is the set of linear applications $g\approx
\begin{pmatrix}
          u_{11}&u_{12}\\
          u_{21}&u_{22}
            \end{pmatrix}$ of $\mathrm{I\!R}^{2}$ satisfying the following equation Eq.~(\ref{eq5})
\begin{eqnarray}
\begin{cases}\label{eq5}
(\Gamma_{21}^{1}-\Gamma_{12}^{1})\xi_{2}=u_{11}\\
(\Gamma_{12}^{1}-\Gamma_{21}^{1})\xi_{1}=u_{12}\\
(\Gamma_{21}^{2}-\Gamma_{12}^{2})\xi_{2}=u_{21}\\
(\Gamma_{12}^{2}-\Gamma_{21}^{2})\xi_{1}=u_{22}\\
\end{cases}
\end{eqnarray}
(iii) $Im\delta^{1}$ is the set of bilinear applications $g\approx
\biggl(\begin{pmatrix}
          u_{11}&u_{12}\\
          u_{21}&u_{22}
            \end{pmatrix},
        \begin{pmatrix}
         v_{11}&v_{12}\\
         v_{221}&v_{22}
            \end{pmatrix}\biggr)$ of $\mathrm{I\!R}^{2}$ verifying the following Eq.~(\ref{eq6}) equation:
\begin{eqnarray}
\begin{cases}\label{eq6}
-\Gamma_{11}^{1}\alpha+\Gamma_{11}^{2}\beta-(\Gamma_{12}^{1}+\Gamma_{21}^{1})\gamma=u_{11}\\
(\Gamma_{12}^{2}-\Gamma_{11}^{1})\beta-\Gamma_{22}^{1}\gamma-\Gamma_{12}^{1}\lambda=u_{12}\\
(\Gamma_{11}^{2}-\Gamma_{11}^{1})\beta-\Gamma_{22}^{1}\gamma-\Gamma_{21}^{1}\lambda=u_{21}\\
\Gamma_{22}^{1}\alpha+(\Gamma_{22}^{2}-\Gamma_{12}^{1}-\Gamma_{21}^{1})\beta-2\Gamma_{22}^{1}\lambda=u_{22}\\
-2\Gamma_{11}^{2}\alpha+(\Gamma_{11}^{1}-\Gamma_{12}^{2}-\Gamma_{21}^{2})\gamma+\Gamma_{11}^{2}\lambda=v_{11}\\
-\Gamma_{12}^{2}\alpha-\Gamma_{11}^{2}\beta+(\Gamma_{12}^{1}-\Gamma_{22}^{2})\gamma=v_{12}\\
-\Gamma_{21}^{2}\alpha-\Gamma_{11}^{2}\beta+(\Gamma_{21}^{1}-\Gamma_{22}^{2})\gamma=v_{21}\\
-(\Gamma_{21}^{2}+\Gamma_{12}^{2})\beta+\Gamma_{22}^{1}\gamma-\Gamma_{22}^{2}\lambda=v_{22}
\end{cases}
\end{eqnarray}
\end{theorem}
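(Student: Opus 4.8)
The plan is to obtain all three claims by reading them off directly from the explicit descriptions of the coboundary operators $\delta^{-1}$, $\delta^{0}$ and $\delta^{1}$ that were established in the proof of the preceding proposition, since by definition $Im\,\delta^{q-1}$ is the set of cochains of the form $\delta^{q-1}(f)$ with $f$ ranging over $C^{q-1}(\mathrm{I\!R}^{2})$. Claim (i) is then immediate: the KV-complex begins with $0\stackrel{\delta^{-1}}{\longrightarrow}C^{0}(\mathrm{I\!R}^{2})$, so the source of $\delta^{-1}$ is the zero space and $Im\,\delta^{-1}=\{0\}$, with nothing further to check.

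For claim (ii), I would fix a cochain $g\in C^{1}(\mathrm{I\!R}^{2})$ with matrix $\begin{pmatrix} u_{11}&u_{12}\\ u_{21}&u_{22}\end{pmatrix}$ in the basis $(e_{1},e_{2})$, so that $g(e_{1})=(u_{11},u_{21})$ and $g(e_{2})=(u_{12},u_{22})$. By definition $g\in Im\,\delta^{0}$ if and only if $g=\delta^{0}\xi$ for some $\xi=(\xi_{1},\xi_{2})\in J(\mathrm{I\!R}^{2})=C^{0}(\mathrm{I\!R}^{2})$. Substituting into the already computed formula $\delta^{0}(\xi)(u)=\bigl((\Gamma_{21}^{1}-\Gamma_{12}^{1})x\xi_{2}+(\Gamma_{12}^{1}-\Gamma_{21}^{1})y\xi_{1},\,(\Gamma_{21}^{2}-\Gamma_{12}^{2})x\xi_{2}+(\Gamma_{12}^{2}-\Gamma_{21}^{2})y\xi_{1}\bigr)$ for $u=(x,y)$, I would evaluate at $u=e_{1}$ and $u=e_{2}$ and match the two components of $\delta^{0}(\xi)(e_{i})$ with the $i$-th column of the matrix of $g$; this yields precisely the system Eq.~(\ref{eq5}). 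The converse is read off the same formula, so Eq.~(\ref{eq5}) characterizes $Im\,\delta^{0}$.

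Claim (iii) is handled in the same way one degree higher. Fix $g\in C^{2}(\mathrm{I\!R}^{2})$ given by the pair of matrices $\Bigl(\begin{pmatrix}u_{11}&u_{12}\\u_{21}&u_{22}\end{pmatrix},\begin{pmatrix}v_{11}&v_{12}\\v_{21}&v_{22}\end{pmatrix}\Bigr)$, so that $g(e_{i},e_{j})=(u_{ij},v_{ij})$. Then $g\in Im\,\delta^{1}$ iff $g=\delta^{1}f_{1}$ for some $f_{1}\in C^{1}(\mathrm{I\!R}^{2})$ with matrix $\begin{pmatrix}\alpha&\beta\\\gamma&\lambda\end{pmatrix}$. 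Plugging into the explicit expression for $\delta^{1}f_{1}(u,v)$ derived in the previous proof, evaluating at the four pairs $(e_{i},e_{j})$ with $1\le i,j\le 2$, and identifying the coefficient of each of the monomials $xx'$, $xy'$, $x'y$, $yy'$ in the first component with $u_{11},u_{12},u_{21},u_{22}$ respectively and in the second component with $v_{11},v_{12},v_{21},v_{22}$, one obtains exactly Eq.~(\ref{eq6}); the converse direction is again immediate from the same formula.

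The computations are not the obstacle: the expansions of $\delta^{0}\xi$ and $\delta^{1}f_{1}$ were already carried out in the proof of the preceding proposition, where the right-hand sides were set to $0$, whereas here one simply keeps the components as the free parameters $u_{ij}$, $v_{ij}$. The only genuine care needed is bookkeeping — matching the coefficient of the correct monomial with the correct matrix entry, and keeping the first-slot equations (involving only the $u_{ij}$) separate from the second-slot equations (involving only the $v_{ij}$). As a safeguard against transcription slips I would also check the compatibility forced by $\delta^{q+1}\circ\delta^{q}=0$, namely that every $g$ solving Eq.~(\ref{eq5}) also solves Eq.~(\ref{eq3}) and every $g$ solving Eq.~(\ref{eq6}) also solves Eq.~(\ref{eq4}); this is automatic but serves as a useful internal consistency check on the coefficient identifications.
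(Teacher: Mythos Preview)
Your proposal is correct and follows essentially the same route as the paper: for (i) you invoke that the complex starts at $0$, and for (ii) and (iii) you take a generic $g$ in the target cochain space, write $g=\delta^{q-1}(\text{something})$, and identify matrix entries by evaluating the explicit formulas for $\delta^{0}\xi$ and $\delta^{1}f_{1}$ (already computed in the preceding proposition) on basis vectors. The paper does exactly this, without your extra $\delta^{q+1}\circ\delta^{q}=0$ sanity check.
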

\begin{proof}
(i) According to KV-complex, we have  $Im\delta^{-1}=\{0\}$. \\
(ii) Let the linear application $g$ of $\mathrm{I\! R}^{2}$ in
$\mathrm{I\! R}^{2}$ the matrix
    $U =\begin{pmatrix}
         u_{11}&u_{12}\\
         u_{21}&u_{22}
            \end{pmatrix}$.\\
For all $u=(x,y) \in \mathrm{I\! R}^{2}$, we have
$g(u)=(u_{11}x+u_{12}y,u_{21}x+u_{22}y)$ . Then,
\begin{eqnarray*}
g\in Im\delta^{0} &\Longleftrightarrow & \delta^{0}(\xi)(u)=g(u),\xi \in J(\mathbb{R}^{2})\\
                  &\Longleftrightarrow &\delta^{0}(\xi)(e_{i})=g(e_{i}), 1\leq i \leq 2
\end{eqnarray*}
equal to
\begin{eqnarray}
\begin{cases}
(\Gamma_{21}^{1}-\Gamma_{12}^{1})\xi_{2}=u_{11}\\
(\Gamma_{12}^{1}-\Gamma_{21}^{1})\xi_{1}=u_{12}\\
(\Gamma_{21}^{2}-\Gamma_{12}^{2})\xi_{2}=u_{21}\\
(\Gamma_{12}^{2}-\Gamma_{21}^{2})\xi_{1}=u_{22}\\
\end{cases}
\end{eqnarray}
(iii) Let the bilinear application $g$ of $\mathrm{I\!R}^{2} \times
\mathrm{I\! R}^{2}$ dans $\mathrm{I\!R}^{2}$ of matrix associated
\begin{eqnarray*}
     U= \begin{pmatrix}
          u_{11}& u_{12}\\
           u_{21}& u_{22}
            \end{pmatrix},
       V= \begin{pmatrix}
          v_{11}& v_{12}\\
           v_{21}& v_{22}
            \end{pmatrix},
\end{eqnarray*}
in the basis $(e_{1}, e_{2})$ de $\mathrm{I\! R}^{2} $.\\
For any  $u=(x,y),v=(x',y')$ elements of $\mathrm{I\! R}^{2}$, we have\\
 $g(u,v)=(u_{11} xx'+ u_{12}x y'+u_{21}x'y+u_{22}yy',v_{11} xx'+ v_{12}x y'+v_{21}x'y+v_{22}yy')$ we have\\
\begin{eqnarray*}
g\in Im \delta^{1} &\Longleftrightarrow & \delta^{1}f_{1}(u,v)=g(u,v)\\
                   &\Longleftrightarrow & \delta^{1}f_{1}(e_{i},e_{j})=g(e_{i},e_{j}), 1\leq i, j \leq 2\\
\end{eqnarray*}
equal to
\begin{eqnarray}
\begin{cases}
-\Gamma_{11}^{1}\alpha+\Gamma_{11}^{2}\beta-(\Gamma_{12}^{1}+\Gamma_{21}^{1})\gamma=u_{11}\\
(\Gamma_{12}^{2}-\Gamma_{11}^{1})\beta-\Gamma_{22}^{1}\gamma-\Gamma_{12}^{1}\lambda=u_{12}\\
(\Gamma_{11}^{2}-\Gamma_{11}^{1})\beta-\Gamma_{22}^{1}\gamma-\Gamma_{21}^{1}\lambda=u_{21}\\
\Gamma_{22}^{1}\alpha+(\Gamma_{22}^{2}-\Gamma_{12}^{1}-\Gamma_{21}^{1})\beta-2\Gamma_{22}^{1}\lambda=u_{22}\\
-2\Gamma_{11}^{2}\alpha+(\Gamma_{11}^{1}-\Gamma_{12}^{2}-\Gamma_{21}^{2})\gamma+\Gamma_{11}^{2}\lambda=v_{11}\\
-\Gamma_{12}^{2}\alpha-\Gamma_{11}^{2}\beta+(\Gamma_{12}^{1}-\Gamma_{22}^{2})\gamma=v_{12}\\
-\Gamma_{21}^{2}\alpha-\Gamma_{11}^{2}\beta+(\Gamma_{21}^{1}-\Gamma_{22}^{2})\gamma=v_{21}\\
-(\Gamma_{21}^{2}+\Gamma_{12}^{2})\beta+\Gamma_{22}^{1}\gamma-\Gamma_{22}^{2}\lambda=v_{22}
\end{cases}
\end{eqnarray}
\end{proof}
\subsubsection{KV-Hessian cohomology on  $\mathbb{R}^{2}$}
The following proposition defines the spaces $H_{KV}^{0}(\mu),
H_{KV}^{1}(\mu)$ of a Hessian KV-structure on $\mathrm{I\!R}^{2}$.
\begin{theorem}
Let $\mu \in Sol(\mathrm{I\!R}^{2}, KV)$, be a KV-Hessian structure
defined by:
\begin{center}
  $\mu =\biggl(\begin{pmatrix}
          0&a\\
          a&0
            \end{pmatrix},
        \begin{pmatrix}
         b&0\\
         0&a
            \end{pmatrix}\biggr),   a\neq b \neq 0.$
\end{center}
We have
\begin{flushleft}
(i)$ H_{KV}^{0}(\mu)\simeq \mathrm{I\! R}^{2}$\\
(ii) $H_{KV}^{1}(\mu)\simeq \{0\}$
\end{flushleft}
\end{theorem}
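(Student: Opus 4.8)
The plan is to read the two cohomology spaces off the explicit descriptions of the kernels and images already established, after substituting the Christoffel symbols of the given structure. For the stated $\mu$ one has $\Gamma^{1}_{11}=\Gamma^{1}_{22}=\Gamma^{2}_{12}=\Gamma^{2}_{21}=0$, $\Gamma^{1}_{12}=\Gamma^{1}_{21}=\Gamma^{2}_{22}=a$ and $\Gamma^{2}_{11}=b$. Since $Im\,\delta^{-1}=\{0\}$ and $Im\,\delta^{0}\subseteq Ker\,\delta^{1}$, we immediately get $H_{KV}^{0}(\mu)\cong Ker\,\delta^{0}$ and $H_{KV}^{1}(\mu)\cong Ker\,\delta^{1}/Im\,\delta^{0}$, so the whole proof reduces to identifying $Ker\,\delta^{0}$ and $Ker\,\delta^{1}$.

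For (i), I would first check that the degree-zero cochain space $C^{0}(\mathrm{I\!R}^{2})=J(\mathrm{I\!R}^{2})$ is all of $\mathrm{I\!R}^{2}$, i.e. that $Ass_{\mu}(u,v,\xi)=0$ for every $u,v,\xi$. With $\mu(u,v)=\bigl(a(xy'+x'y),\,bxx'+ayy'\bigr)$, a direct expansion of $\mu(\mu(u,v),\xi)-\mu(u,\mu(v,\xi))$ shows all monomials cancel in pairs; equivalently, in the basis $(e_{1},e_{2})$ the table $e_{1}e_{1}=be_{2}$, $e_{1}e_{2}=e_{2}e_{1}=ae_{1}$, $e_{2}e_{2}=ae_{2}$ is associative, hence $J(\mathrm{I\!R}^{2})=\mathrm{I\!R}^{2}$. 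Then, because $\Gamma^{1}_{12}=\Gamma^{1}_{21}$ and $\Gamma^{2}_{12}=\Gamma^{2}_{21}$, every one of the four equations of Eq.~(\ref{eq2}) reduces to $0=0$; therefore $Ker\,\delta^{0}=\mathrm{I\!R}^{2}$ and $H_{KV}^{0}(\mu)\simeq\mathrm{I\!R}^{2}$.

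For (ii), I would substitute the above values into the eight equations of Eq.~(\ref{eq3}) for $f_{1}\approx\bigl(\begin{smallmatrix}\alpha&\beta\\ \gamma&\lambda\end{smallmatrix}\bigr)$. The second equation becomes $-a\lambda=0$, hence $\lambda=0$; the sixth becomes $-b\beta=0$, hence $\beta=0$; the fifth then reads $-2b\alpha=0$, hence $\alpha=0$; and the first reads $-2a\gamma=0$, hence $\gamma=0$ — each deduction using $a\neq 0$ and $b\neq 0$, and the remaining equations are then automatically satisfied. Thus $Ker\,\delta^{1}=\{0\}$, and since $Im\,\delta^{0}\subseteq Ker\,\delta^{1}=\{0\}$ we conclude $H_{KV}^{1}(\mu)\simeq\{0\}$.

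The only step that is not pure substitution — and hence the one to handle carefully — is the identification $C^{0}(\mathrm{I\!R}^{2})=J(\mathrm{I\!R}^{2})=\mathrm{I\!R}^{2}$, which rests on verifying that the associator of $\mu$ vanishes identically (the short expansion, or the associativity check on the basis above). Once that is in place, both parts follow mechanically by feeding the specific $\Gamma^{k}_{ij}$ into the linear systems Eq.~(\ref{eq2}) and Eq.~(\ref{eq3}) and invoking $a\neq b\neq 0$.
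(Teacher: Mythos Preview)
Your proposal is correct and follows essentially the same route as the paper: substitute the specific $\Gamma^{k}_{ij}$ into the linear systems Eq.~(\ref{eq2}) and Eq.~(\ref{eq3}) and read off $Ker\,\delta^{0}=\mathrm{I\!R}^{2}$ and $Ker\,\delta^{1}=\{0\}$. Two small differences are worth noting: you add the explicit check that $J(\mathrm{I\!R}^{2})=\mathrm{I\!R}^{2}$ (the paper tacitly assumes this), and you obtain $Im\,\delta^{0}=\{0\}$ by the inclusion $Im\,\delta^{0}\subseteq Ker\,\delta^{1}=\{0\}$ rather than by specializing Eq.~(\ref{eq5}) directly as the paper does; both are harmless improvements.
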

\begin{proof}
For all $u=(x,y), v=(x',y')$; we have: $\mu(u,v) =(axy'+ayx', bxx'+ayy')$.\\
\textbf{- Case of $H_{KV}^{0}(\mu)$}\\
 $\mu$ is symmetrical, so we have: $\Gamma_{21}^{1}= \Gamma_{12}^{1}, \Gamma_{21}^{2}= \Gamma_{12}^{2}$.
 From the equation Eq.~(\ref{eq2}), we deduce that: $\xi=(\xi_{1},\xi_{2})$; and therefore $Ker\delta^{0}=Vect \biggl\{e_{1},e_{2} \biggr\}$.
  Hence:  $ H_{KV}^{0}(\mu)\cong \mathrm{I\! R}^{2}$.\\
\textbf{- Case of $H_{KV}^{1}(\mu)$}\\
From the equation Eq.~(\ref{eq3}), We have: $\lambda= \beta= \alpha=
\gamma = 0$. This implies that:
    $f_{1} =0$; and therefore: $Ker\delta^{1}=\{0\}$.\\
On the other hand, from the equation  Eq.~(\ref{eq4}) ,We have:
$u_{12}=u_{21}=u_{11}=u_{22}=0$; $g=0$; and therefore
$Im\delta^{0}=\{0\}$. Hence:  $H_{KV}^{1}(\mu)\cong  \biggl\{0\biggr
\}$.
\end{proof}
The following proposition defines the space $H_{KV}^{2}(\mu)$ when
$\mu$ is a Hessian KV-structure on $\mathrm{I\!R}^{2}$.
\begin{theorem}
Let $\mu \in Sol(\mathrm{I\!R}^{2}, KV)$ be a KV-Hessian structure
defined by
\begin{center}
 $\mu =\biggl(\begin{pmatrix}
          0&a\\
          a&0
            \end{pmatrix},
        \begin{pmatrix}
         b&0\\
         0&a
            \end{pmatrix}\biggr),   a\neq b \neq 0$.
\end{center}
 We have $H_{KV}^{2}(\mu)\cong \biggl\{(E_{12},0),(E_{21},0),(0,E_{22})\biggr\}.$
\end{theorem}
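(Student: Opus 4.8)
The plan is to compute $H_{KV}^{2}(\mu)=Ker\delta^{2}/Im\delta^{1}$ by inserting the structure constants of the given Hessian structure into the two descriptions already obtained, namely Eq.~(\ref{eq4}) for $Ker\delta^{2}$ and Eq.~(\ref{eq6}) for $Im\delta^{1}$. For this $\mu$ one reads off $\Gamma_{11}^{1}=\Gamma_{22}^{1}=0$, $\Gamma_{12}^{1}=\Gamma_{21}^{1}=a$, $\Gamma_{11}^{2}=b$, $\Gamma_{12}^{2}=\Gamma_{21}^{2}=0$, $\Gamma_{22}^{2}=a$. After this substitution the whole statement reduces to comparing two explicit linear subspaces of the $8$-dimensional space $C^{2}(\mathrm{I\!R}^{2})$ of bilinear maps: the first is cut out by the four cocycle relations of Eq.~(\ref{eq4}) in the entries $(e,f,g,h,i,j,k,l)$ of a generic $2$-cochain $f_{2}$, and the second is the image of the linear map $(\alpha,\beta,\gamma,\lambda)\mapsto(u_{ij},v_{ij})$ given by Eq.~(\ref{eq6}).

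First I would solve the cocycle system. After the substitution several columns vanish identically --- in particular the $e$- and $i$-columns, since $\Gamma_{12}^{1}-\Gamma_{21}^{1}=0$, $\Gamma_{22}^{1}=0$ and $\Gamma_{12}^{1}-2\Gamma_{21}^{1}+\Gamma_{22}^{2}=0$ --- so that $e,f,h,i$ remain free while the remaining relations force $l=g=f$, $j=f+\tfrac{b}{a}h$ and $k=\tfrac12 f+\tfrac{b}{a}h$. This exhibits $Ker\delta^{2}$ explicitly with a basis of four $2$-cochains. Next I would treat $Im\delta^{1}$ in the same way; here the theorem above computing $H_{KV}^{1}(\mu)$ already yields $Ker\delta^{1}=\{0\}$, so $\delta^{1}$ is injective on $C^{1}(\mathrm{I\!R}^{2})$ and $Im\delta^{1}\cong C^{1}(\mathrm{I\!R}^{2})$ is $4$-dimensional, and Eq.~(\ref{eq6}) realises it concretely (as a subspace of $Ker\delta^{2}$, since $\delta^{2}\circ\delta^{1}=0$).

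Finally I would pass to the quotient. Using the explicit formula for $\delta^{2}f_{2}$ derived in the proof of the proposition describing $Ker\delta^{q}$, one verifies that $(E_{12},0)$, $(E_{21},0)$ and $(0,E_{22})$ are $\delta^{2}$-cocycles and that they are linearly independent; then one checks, against the parametrisation of $Im\delta^{1}$ obtained in the previous step, that no nonzero combination of them is a coboundary. Conversely one shows that $(E_{12},0)$, $(E_{21},0)$, $(0,E_{22})$ span a complement of $Im\delta^{1}$ in $Ker\delta^{2}$, i.e. every element of $Ker\delta^{2}$ is congruent modulo $Im\delta^{1}$ to a combination of these three. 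Hence their cohomology classes form a basis of $H_{KV}^{2}(\mu)$, which is precisely the claimed isomorphism $H_{KV}^{2}(\mu)\cong\{(E_{12},0),(E_{21},0),(0,E_{22})\}$.

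The first two steps are routine, if somewhat long, Gaussian elimination. The real obstacle is the last step: one has to choose the three representatives in a basis in which a complement of $Im\delta^{1}$ inside $Ker\delta^{2}$ is transparent, and then verify simultaneously the cocycle property, the linear independence, and the non-coboundary property --- and any slip in the ordering or sign conventions for the entries $(e,\dots,l)$, or for the monomials $xx'$, $xy'$, $x'y$, $yy'$ (and $xx'x''$, \dots\ at the $C^{3}$ level), would invalidate the comparison.
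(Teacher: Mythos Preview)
Your overall strategy matches the paper's --- specialise Eq.~(\ref{eq4}) and Eq.~(\ref{eq6}) to this $\mu$ and compare --- but your execution is internally inconsistent. You solve the specialised cocycle system correctly and obtain four free parameters $e,f,h,i$ with $l=g=f$, $j=f+\tfrac{b}{a}h$, $k=\tfrac12 f+\tfrac{b}{a}h$, so $\dim Ker\delta^{2}=4$; and you correctly observe that $\dim Im\delta^{1}=4$ because $\delta^{1}$ is injective. Since $Im\delta^{1}\subset Ker\delta^{2}$ and both have dimension $4$, they coincide and the quotient is zero-dimensional: there is no room for a three-dimensional complement, so your proposed final step cannot be carried out. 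Concretely, the check ``verify that $(E_{12},0)$, $(E_{21},0)$, $(0,E_{22})$ are $\delta^{2}$-cocycles'' already fails: $(E_{12},0)$ has $f=1$, $g=0$, violating your own relation $g=f$; $(E_{21},0)$ has $g=1$, $f=0$; and $(0,E_{22})$ has $l=1$, $f=0$. None of the three lies in the kernel you just described.

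For comparison, the paper's proof derives the same four equations and the same relations $l=g=f$, $j=k+\tfrac12 f$, but then asserts $Ker\delta^{2}=Vect\{(E_{11},0),\dots,(0,E_{22})\}$, the full $8$-dimensional $C^{2}(\mathrm{I\!R}^{2})$, ignoring the constraints it has just found; it then produces a $5$-dimensional $Im\delta^{1}$ (after imposing $\delta^{2}g=0$ on $g\in Im\delta^{1}$, a condition that ought to be automatic) and takes the set-theoretic complement to obtain dimension $3$. Your linear algebra is more careful than the paper's, but it leads to $H_{KV}^{2}(\mu)=0$, in contradiction with the stated theorem. The genuine gap in your proposal is that you did not notice this dimension mismatch between your own first two steps before announcing the third.
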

\begin{proof}
from the equation Eq.~(\ref{eq5}),  we have: $\begin{cases}
aj-2ak+bh=0\\
bf-2bg+bl=0\\
af-al=0\\
aj-ag-bh=0\\
     \end{cases}$.
This implies that:\\
$l=g=f, j=k+\frac{1}{2}f$. And therefore:
    $f_{2} =\biggl(\begin{pmatrix}
         e&f\\
         f&h
            \end{pmatrix},
        \begin{pmatrix}
          i&k+\frac{1}{2}f\\
         k+\frac{1}{2}f&f
            \end{pmatrix}\biggr)$. Hence:
\begin{center}
 $ Ker\delta^{2}=Vect \biggl\{(E_{11},0),(E_{12},0),(E_{21},0),(E_{22},0),(0,E_{11}),(0,E_{12}),(0,E_{21}),(0,E_{22})\biggr\}$.
\end{center}
On the other hand, from the equation  Eq.~(\ref{eq6}) we have: $
\begin{cases}
b\beta-2a\gamma=u_{11}\\
-a\lambda=u_{12}\\
b\beta-a\lambda=u_{21}\\
-a\beta=u_{22}\\
-2b\alpha+b\lambda=v_{11}\\
-b\beta=v_{12}\\
-b\beta=v_{21}\\
-a\lambda=v_{22}\\
     \end{cases}$. We deduce that:\\
 $v_{22}=u_{12}, v_{21}=v_{12}=\frac{b}{a} u_{22}$; and therefore
  $g =\biggl(\begin{pmatrix}
         u_{11}&u_{12}\\
         u_{21} & u_{22}
            \end{pmatrix},
        \begin{pmatrix}
        v_{11}&\frac{b}{a} u_{22}\\
        \frac{b}{a} u_{22}&u_{12}
            \end{pmatrix}\biggr)$. But\\
\begin{eqnarray*}
\delta^{2}g(u,v, w)=0 & \Longleftrightarrow & \delta^{2}g(e_{i},e_{j}, e_{k})=0, 1 \leq i, j, k \leq 2\\
& \Longleftrightarrow & \begin{cases}
u_{12}=0\\
u_{21}=u_{12}\\
     \end{cases}
\end{eqnarray*}
We have: $g =\biggl(\begin{pmatrix}
         u_{11}&0\\
         0 & u_{22}
            \end{pmatrix},
        \begin{pmatrix}
        v_{11}&\frac{b}{a} u_{22}\\
        \frac{b}{a} u_{22}&0
            \end{pmatrix}\biggr)$; and therefore:\\
 $ Im\delta^{1}=Vect \biggl\{(E_{11},0),(E_{22},0),(0,E_{11}),(0,E_{12}),(0,E_{21})\biggr\}$. Hence:
  $H_{KV}^{2}(\mu)\cong \biggl\{(E_{12},0),(E_{21},0),(0,E_{22})\biggr\}$.
\end{proof}
\section{Deformations of a Hessian KV-structure on $\mathbb{R}^{2}$}\label{sec4}
In this paragraph, we first justify that the deformations of a
KV-structure of a KV-algebra A are 2-cocycles. Furthermore, we
determine the deformations of a Hessian KV-structure on$\mathrm{I\!
R}^{2}$.
\subsection{Deformation of a KV-algebra}
$(A,\mu)$ is a KV-algebra. Let $\mu_{t}$ be a deformation of $\mu$;
we have: $\mu_{t}= \mu+t\nu_{1}+t^{2}\nu_{2}+t^{3}\nu_{3}+. . .$
\begin{proposition}
$\mu_{t}$ is a KV-structure $\Leftrightarrow$ $\nu_{i} \in
Ker\delta^{2}$, $i=1,2,3,\dots$
\end{proposition}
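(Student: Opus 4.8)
The plan is to substitute the formal one--parameter family $\mu_{t}=\mu+\sum_{s\geq 1}t^{s}\nu_{s}$ (with the convention $\nu_{0}=\mu$) into the KV--anomaly $KV_{\mu_{t}}$ and expand in powers of $t$; requiring $KV_{\mu_{t}}=0$ then becomes a system of conditions on the $\nu_{s}$, which I want to identify with membership in $Ker\,\delta^{2}$. Concretely, for $a,b,c\in A$ one first writes
\[
Ass_{\mu_{t}}(a,b,c)=\mu_{t}(\mu_{t}(a,b),c)-\mu_{t}(a,\mu_{t}(b,c))=\sum_{n\geq 0}t^{n}\sum_{s+r=n}\bigl(\nu_{s}(\nu_{r}(a,b),c)-\nu_{s}(a,\nu_{r}(b,c))\bigr),
\]
so that $KV_{\mu_{t}}(a,b,c)=\sum_{n\geq 0}t^{n}K_{n}(a,b,c)$ with
\[
K_{n}(a,b,c)=\sum_{s+r=n}\bigl(\nu_{s}(\nu_{r}(a,b),c)-\nu_{s}(a,\nu_{r}(b,c))-\nu_{s}(\nu_{r}(b,a),c)+\nu_{s}(b,\nu_{r}(a,c))\bigr).
\]
Thus $\mu_{t}$ is a KV--structure if and only if $K_{n}=0$ for every $n\geq 0$.

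The next step is to analyse these equations order by order. For $n=0$ the only term is $s=r=0$, giving $K_{0}(a,b,c)=Ass_{\mu}(a,b,c)-Ass_{\mu}(b,a,c)=KV_{\mu}(a,b,c)$, which vanishes because $(A,\mu)$ is a KV--algebra; so the $t^{0}$ coefficient imposes nothing. For $n\geq 1$ I would split $K_{n}$ into the part carrying $\nu_{n}$, namely the contributions $(s,r)=(0,n)$ and $(s,r)=(n,0)$, and a remainder $R_{n}$ collecting the mixed terms with $s,r\geq 1$ (so $R_{1}=0$). Writing out the $(0,n)$ and $(n,0)$ terms and comparing them term by term with the coboundary formula $\delta^{2}f_{2}(u,v,w)=vf_{2}(u,w)-uf_{2}(v,w)+f_{2}(v,uw)-f_{2}(u,vw)+f_{2}(uv,w)-f_{2}(vu,w)+f_{2}(u,v)w-f_{2}(v,u)w$ from Section~2 (using $vf_{2}(u,w)=\mu(v,f_{2}(u,w))$, $f_{2}(u,v)w=\mu(f_{2}(u,v),w)$, $f_{2}(v,uw)=f_{2}(v,\mu(u,w))$, and so on, exactly as in the proof of the preceding proposition on $Ker\,\delta^{2}$), one sees that these eight terms are precisely $\delta^{2}\nu_{n}(a,b,c)$. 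Hence $K_{n}=\delta^{2}\nu_{n}+R_{n}$ for every $n\geq 1$.

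At first order the equivalence is then immediate: $K_{1}=\delta^{2}\nu_{1}$, so $K_{1}=0\iff\nu_{1}\in Ker\,\delta^{2}$. The step I expect to be the \textbf{main obstacle} is the treatment of the remainder $R_{n}$ for $n\geq 2$, that is, the genuinely nonlinear (Maurer--Cartan-type) part of the deformation equation: a priori $K_{n}=0$ only yields $\delta^{2}\nu_{n}=-R_{n}$, where
\[
R_{n}(a,b,c)=\sum_{\substack{s+r=n\\ s,r\geq 1}}\bigl(\nu_{s}(\nu_{r}(a,b),c)-\nu_{s}(a,\nu_{r}(b,c))-\nu_{s}(\nu_{r}(b,a),c)+\nu_{s}(b,\nu_{r}(a,c))\bigr),
\]
so to get the clean statement one must show that $R_{n}$ drops out. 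I would do this by induction on $n$: assuming $\nu_{1},\dots,\nu_{n-1}\in Ker\,\delta^{2}$ are already established, I would rearrange the inner compositions $\nu_{s}(\nu_{r}(a,b),c)$ using the symmetry $Ass_{\mu}(a,b,c)=Ass_{\mu}(b,a,c)$ together with the cocycle identities $\delta^{2}\nu_{s}=0$ for $s<n$, so that the terms of $R_{n}$ cancel in pairs and $R_{n}\equiv 0$; this is the KV--analogue of the vanishing of the quadratic bracket of lower-order cocycles and is where the real work lies. Once $R_{n}\equiv 0$ is in hand, a single induction on $n$ delivers both implications simultaneously: $\mu_{t}$ is a KV--structure $\iff K_{n}=0$ for all $n$ $\iff \delta^{2}\nu_{n}=0$ for all $n$ $\iff \nu_{i}\in Ker\,\delta^{2}$ for every $i=1,2,3,\dots$
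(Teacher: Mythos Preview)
Your expansion of $KV_{\mu_{t}}$ is more honest than the paper's: the paper simply writes
\[
\mu_{t}(\mu_{t}(u,v),w)=\mu(\mu(u,v),w)+\sum_{s}t^{s}\bigl[\nu_{s}(\mu(u,v),w)+\mu(\nu_{s}(u,v),w)\bigr],
\]
silently discarding every cross term $t^{s+r}\,\nu_{s}(\nu_{r}(u,v),w)$ with $s,r\geq 1$, and thereby obtains $KV_{\mu_{t}}=KV_{\mu}+\sum_{s}t^{s}d_{\mu}\nu_{s}$ directly. In your notation the paper sets $R_{n}\equiv 0$ by fiat and then the equivalence $KV_{\mu_{t}}=0\iff\delta^{2}\nu_{s}=0$ is immediate. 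That linearised computation is all that is actually used downstream (the next theorem only needs each $\nu_{i}$ to satisfy the 2--cocycle equations~(\ref{eq7})).

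The gap in your route is the step you yourself flag as the \textbf{main obstacle}: the claim that $R_{n}\equiv 0$ once $\nu_{1},\dots,\nu_{n-1}$ are cocycles. This is not true in general. The quantity $R_{n}$ is (up to sign) the Gerstenhaber--type bracket $\sum_{s+r=n,\,s,r\geq 1}[\nu_{s},\nu_{r}]$, and the cocycle condition on the $\nu_{s}$ only guarantees that $R_{n}$ is $\delta^{3}$--closed, not that it vanishes as a cochain; its class in $H^{3}_{KV}$ is precisely the obstruction to extending the deformation. Your proposed pairwise cancellation via the identity $Ass_{\mu}(a,b,c)=Ass_{\mu}(b,a,c)$ cannot work, because that identity involves $\mu$ and says nothing about $\nu_{s}(\nu_{r}(a,b),c)$. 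So either you must accept the paper's linearised reading of the proposition (drop the quadratic terms, as the paper does, and regard the statement as the first--order condition $\delta^{2}\nu_{i}=0$), or you must weaken the statement to the standard Maurer--Cartan form $\delta^{2}\nu_{n}=-R_{n}$ for each $n$; the clean ``$\Leftrightarrow$'' you are aiming for does not hold at the level of generality suggested.
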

\begin{proof}
Let's assume $s=p$ fix. We define \textbf{-the associator of
$\mu_{t}$ by:}
$Ass_{\mu_{t}}(u,v, w)= \mu_{t}(\mu_{t}(u,v),w)-\mu_{_{t}}(u,\mu_{t}(v,w)$,\\
for all u, v, w elements of $\mathrm{I\! R}^{2}$; with:
\begin{eqnarray*}
\mu_{t}(\mu_{t}(u,v),w)&=& (\mu+\sum_{s=1}^{p}t^{s}\nu_{s})((\mu+\sum_{s=1}^{p}t^{s}\nu_{s})(u,v),w)\\
                       &=& \mu(\mu(u,v),w)+\sum_{s=1}^{p}t^{s}[\nu_{s}(\mu(u,v),w)+\mu(\nu_{s}(u,v),w)]
\end{eqnarray*}
and
\begin{eqnarray*}
\mu_{t}(u,\mu_{t}(v,w))&=& (\mu+\sum_{s=1}^{p}t^{s}\nu_{s})(u,(\mu+\sum_{s=1}^{p}t^{s}\nu_{s})(v,w))\\
                       &=& \mu(u,\mu(v,w))+\sum_{s=1}^{p}t^{s}[\nu_{s}(u,\mu(v,w))+\mu(u,\nu_{s}(v,w))]
\end{eqnarray*}
So
\begin{eqnarray}
Ass_{\mu_{t}}(u,v, w) &=& Ass_{\mu}(u,v, w)+\sum_{s=1}^{p}t^{s}[\nu_{s}(\mu(u,v),w)+\mu(\nu_{s}(u,v),w) \nonumber \\
                      &-&\nu_{s}(u,\mu(v,w))-\mu(u,\nu_{s}(v,w))]
\end{eqnarray}
\textbf{the KV-anomaly of $\mu_{t}$ by:}
$KV_{\mu_{t}}(u,v,w)=Ass_{\mu_{t}}(u,v, w)-Ass_{\mu_{t}}(v,u, w)$,
for all u, v and w elements of $\mathrm{I\! R}^{2}$; with:
\begin{eqnarray*}
Ass_{\mu_{t}}(v,u, w)&=& Ass_{\mu}(v,u, w)+\sum_{s=1}^{p}t^{s}[\nu_{s}(\mu(v,u),w)+\mu(\nu_{s}(v,u),w)\\
                     &-& \nu_{s}(v,\mu(u,w))-\mu(v,\nu_{s}(u,w))]
\end{eqnarray*}
So:
\begin{eqnarray*}
KV_{\mu_{t}}(u, v,w) &=& KV_{\mu}(u, v,w) \\
                     &+& \sum_{s=1}^{p}t^{s}[\nu_{s}(\mu(u,v),w)+\mu(\nu_{s}(u,v),w)-\nu_{s}(u,\mu(v,w))-\mu(u,\nu_{s}(v,w))\\
                     &-& \nu_{s}(\mu(v,u),w)-\mu(\nu_{s}(v,u),w)+\nu_{s}(v,\mu(u,w))+\mu(v,\nu_{s}(u,w))]
\end{eqnarray*}
It means: $KV_{\mu_{t}}(u, v,w) = KV_{\mu}(u, v,w) +
\sum_{s=1}^{p}t^{s}d_{\mu}\nu_{s}(u, v,w)$, with:
\begin{eqnarray}
d_{\mu}\nu_{s}(u,v,w)&=&\nu_{s}(\mu(u,v),w)\nonumber\\
                     &+&\mu(\nu_{s}(u,v),w)-\nu_{s}(u,\mu(v,w))-\mu(u,\nu_{s}(v,w))-\nu_{s}(\mu(v,u),w)\nonumber\\ &-&\mu(\nu_{s}(v,u),w)+\nu_{s}(v,\mu(u,w))+\mu(v,\nu_{s}(u,w))
\end{eqnarray}
Thus, if $\mu_{t}$ is a KV-structure of algebra, we have:
\begin{eqnarray}
KV_{\mu_{t}}=0 &\Longleftrightarrow & KV_{\mu} + \sum_{s=1}^{p}t^{s}d_{\mu}\nu_{s}=0 \nonumber\\
               &\Longleftrightarrow & \sum_{s=1}^{p}t^{s} d_{\mu}\nu_{s}=0 \nonumber\\
               &\Longleftrightarrow & d_{\mu}\nu_{s}=0, s=1,\dots, p
\end{eqnarray}
 Let $(e_{1},e_{2})$ be the canonical basis of $\mathrm{I\!R}^{2}$ and $\nu_{s}: \mathrm{I\!R}^{2} \times \mathrm{I\!R}^{2}$ $\longrightarrow$ $\mathrm{I\!R}^{2}$, the algebra structures in $\mathbb{R}^{2}$ such that:
 $\nu_{s}(e_{i},e_{j})=a_{ij}^{k}e_{k}$.\\
 Thus, for all u, v , w elements of $\mathrm{I\!R}^{2}$,
\begin{eqnarray*}
 d_{\mu}\nu_{s}(u,v,w)=0 &\Longleftrightarrow & d_{\mu}\nu_{s}(e_{i},e_{j},e_{k})=0,  1 \leq i, j, k \leq 2 \nonumber
\end{eqnarray*}
This is equivalent to
 $\begin{cases}
  d_{\mu}\nu_{1}(e_{i},e_{j},e_{k})=0\\
  d_{\mu}\nu_{2}(e_{i},e_{j},e_{k})=0\\
  \ldots\\
  d_{\mu}\nu_{p}(e_{i},e_{j},e_{k})=0\\
  \end{cases}$.
 But
\begin{eqnarray*}
 d_{\mu}\nu_{1}(u,v,w)=0 &\Longleftrightarrow & d_{\mu}\nu_{1}(e_{i},e_{j},e_{k})=0,  1 \leq i, j, k \leq 2 \nonumber\\
                         &\Longleftrightarrow&\mu(\nu_{1}(e_{i},e_{j}),e_{k})+\nu_{1}(\mu(e_{i},e_{j}),e_{k})-\nu_{1}(e_{i},\mu(e_{j},e_{k})) \nonumber\\
                         &-&\mu(e_{i},\nu_{1}(e_{j},e_{k}))-\nu_{1}(\mu(e_{j},e_{i}),e_{k})-\mu(\nu_{1}(e_{j},e_{i}),e_{k}) \nonumber\\
                         &+&\nu_{1}(e_{j},\mu(e_{i},e_{k})) +\mu(e_{j},\nu_{1}(e_{i},e_{k}))=0
\end{eqnarray*}
\begin{eqnarray}
&\Longleftrightarrow&
\begin{cases}\label{eq7}
 (\Gamma_{12}^{1}-\Gamma_{21}^{1})a_{11}^{1}+\Gamma_{22}^{1}a_{11}^{2}+(\Gamma_{11}^{1}-\Gamma_{21}^{2})a_{12}^{1}+\Gamma_{21}^{1}a_{12}^{2}
 +(\Gamma_{12}^{2}-\Gamma_{11}^{1}-\Gamma_{21}^{2})a_{21}^{1}\\
 -(\Gamma_{12}^{1}+\Gamma_{21}^{1})a_{21}^{2}+\Gamma_{11}^{2}a_{22}^{1}=0\\
\Gamma_{21}^{2}a_{11}^{1}+(\Gamma_{12}^{1}-2\Gamma_{21}^{1}+\Gamma_{22}^{2})a_{11}^{2}+\Gamma_{11}^{2}a_{12}^{1}-2\Gamma_{11}^{2}a_{21}^{1}
+(\Gamma_{11}^{1}-2\Gamma_{21}^{2})a_{21}^{2}\\
+\Gamma_{11}^{2}a_{22}^{2}=0\\
-\Gamma_{22}^{1}a_{11}^{1}+(2\Gamma_{12}^{1}-\Gamma_{22}^{2})a_{12}^{1}+2\Gamma_{22}^{1}a_{12}^{2}-\Gamma_{22}^{1}a_{21}^{2}
 +(2\Gamma_{12}^{2}-\Gamma_{11}^{1}-\Gamma_{21}^{2})a_{22}^{1}-\Gamma_{12}^{1}a_{22}^{2}=0\\
-\Gamma_{22}^{1}a_{11}^{2}+(\Gamma_{12}^{2}+\Gamma_{21}^{2})a_{12}^{1}+(\Gamma_{12}^{1}-\Gamma_{21}^{1}+\Gamma_{22}^{2})a_{12}^{2}\\
-\Gamma_{12}^{1}a_{21}^{1}+(\Gamma_{12}^{1}-\Gamma_{22}^{2})a_{21}^{2}-\Gamma_{11}^{2}a_{22}^{1}+(\Gamma_{12}^{2}-\Gamma_{21}^{2})a_{22}^{2}=0\\
 \end{cases}
 \end{eqnarray}
Similarly $d_{\mu}\nu_{s}(u,v,w)=0 , 2\leq s\leq p$ equals
Eq.~(\ref{eq7}) which is the equation Eq.~(\ref{eq4}), defining
$Ker\delta^{2}$.
  Thus, for all $u$, $v$ and $w$ elements of $\mathrm{I\!R}^{2}$,
\begin{center}
  $d_{\mu}\nu_{s}(u,v,w)=0  \Longleftrightarrow  \nu_{s} \in Ker\delta^{2}$
\end{center}
\end{proof}
\subsection{The case of the KV-Hessian structure}
In this subsection, we determine the deformations of a Hessian
KV-structure on $\mathrm{I\! R}^{2}$. The following proposition
characterizes these deformations.
\begin{theorem}
Let $\mu$ be a KV-Hessian structure defined by
\begin{center}
  $\mu =\biggl(\begin{pmatrix}
          0&a\\
          a&0
            \end{pmatrix},
        \begin{pmatrix}
         b&0\\
         0&a
            \end{pmatrix}\biggr),   a\neq b \neq 0$
\end{center}
In a $(e_{1},e_{2})$ basis of $\mathrm{I\!R}^{2}$, the $\mu_{t}$
deformations of the $\mu$ KV-structure are:
\begin{eqnarray*}
        \biggl(\begin{pmatrix}
         a_{11}^{1}\sum_{i\geq 1}t^{i}  & a+ a_{12}^{1}\sum_{i\geq 1}t^{i}\\
           a+ a_{12}^{1}\sum_{i\geq 1}t^{i}&a_{22}^{1}\sum_{i\geq 1}t^{i}
            \end{pmatrix},
        \begin{pmatrix}
        b+ a_{11}^{2}\sum_{i\geq 1}t^{i}&(a_{21}^{2}+\frac{1}{2}a_{12}^{1})\sum_{i\geq 1}t^{i}\\
        (a_{21}^{2}+\frac{1}{2}a_{12}^{1})\sum_{i\geq 1}t^{i} &a+a_{12}^{1}\sum_{i\geq 1}t^{i}
            \end{pmatrix}\biggr)
\end{eqnarray*}
\end{theorem}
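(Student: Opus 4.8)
The plan is to chain together the two facts already available in this section. By the Proposition stated just above, if we write $\mu_{t}=\mu+\sum_{s\geq 1}t^{s}\nu_{s}$, then requiring $\mu_{t}$ to be a deformation of $\mu$ as a KV-structure is \emph{exactly} the requirement that $\nu_{s}\in Ker\delta^{2}$ for every $s\geq 1$. So the theorem splits into two purely linear tasks: (a) describe $Ker\delta^{2}$ explicitly for this particular Hessian $\mu$, and (b) re-assemble the matrix of $\mu_{t}=\mu+\sum_{s\geq 1}t^{s}\nu_{s}$ from that description.

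For (a), I would substitute the entries of $\mu$, namely $\Gamma_{11}^{1}=\Gamma_{22}^{1}=0$, $\Gamma_{12}^{1}=\Gamma_{21}^{1}=a$, $\Gamma_{11}^{2}=b$, $\Gamma_{12}^{2}=\Gamma_{21}^{2}=0$, $\Gamma_{22}^{2}=a$, into the system Eq.~(\ref{eq4}) characterising $Ker\delta^{2}$. Writing a generic $2$-cochain $\nu_{s}$ with first-component matrix entries $e,f,g,h$ and second-component entries $i,j,k,l$, the four equations collapse (using $a\neq 0$) to $f=g=l$ and $j=k+\tfrac12 f$, so that $Ker\delta^{2}$ is parametrised by the free scalars $a_{11}^{1}=e$, $a_{12}^{1}=f$, $a_{22}^{1}=h$, $a_{11}^{2}=i$, $a_{21}^{2}=k$. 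Reading this back into block-matrix form gives, for each $s\geq 1$,
\[
\nu_{s}\approx\biggl(\begin{pmatrix}a_{11}^{1}&a_{12}^{1}\\a_{12}^{1}&a_{22}^{1}\end{pmatrix},\ \begin{pmatrix}a_{11}^{2}&a_{21}^{2}+\tfrac12 a_{12}^{1}\\a_{21}^{2}+\tfrac12 a_{12}^{1}&a_{12}^{1}\end{pmatrix}\biggr).
\]

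For (b), I would substitute this into $\mu_{t}=\mu+\sum_{s\geq 1}t^{s}\nu_{s}$: the $\mu$-term supplies the constant blocks $\Gamma_{1}$ and $\Gamma_{2}$ of the given Hessian structure, while each $\nu_{s}$ supplies the same parametrised pair of matrices, so the series $\sum_{s\geq 1}t^{s}\nu_{s}$ supplies that pair scaled by the scalar series $\sum_{i\geq 1}t^{i}$. Adding the two contributions yields precisely the displayed expression for $\mu_{t}$.

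The step I expect to be the main obstacle is (a): carefully reducing the four coupled equations of Eq.~(\ref{eq4}) at this specific $\mu$ and identifying which of the eight entries of $\nu_{s}$ remain as independent parameters — in particular keeping track of the relation $j=k+\tfrac12 f$, which is what produces the term $\tfrac12 a_{12}^{1}$ appearing in the off-diagonal of the second block of $\mu_{t}$. The rest is routine: the reduction to ``$\nu_{s}\in Ker\delta^{2}$'' is immediate from the preceding Proposition, and pulling the geometric series $\sum_{i\geq 1}t^{i}$ out of the coefficient-independent parametrisation of $Ker\delta^{2}$ is bookkeeping.
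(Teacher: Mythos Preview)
Your proposal is correct and follows essentially the same route as the paper: the paper substitutes the specific Christoffel symbols of $\mu$ into the 2-cocycle system (it cites Eq.~(\ref{eq7}), which it has already identified with the $Ker\delta^{2}$ system Eq.~(\ref{eq4}) you invoke), obtains the same reduced relations $a_{22}^{2}=a_{21}^{1}=a_{12}^{1}$ and $a_{12}^{2}=a_{21}^{2}+\tfrac{1}{2}a_{12}^{1}$, writes down the resulting $\nu_{i}$, and then forms $\mu_{t}=\mu+\sum_{i\geq 1}t^{i}\nu_{i}$. The only cosmetic difference is that you appeal to the preceding Proposition and Eq.~(\ref{eq4}) directly, whereas the paper goes through Eq.~(\ref{eq7}); since the paper itself notes these two systems coincide, the arguments are identical.
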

\begin{proof}
From equation Eq.~(\ref{eq7}), we have: $\begin{cases}
aa_{12}^{2}-2aa_{21}^{2}+ba_{22}^{1}=0\\
ba_{12}^{1}-2ba_{21}^{1}+ba_{22}^{2}=0\\
aa_{12}^{1}-aa_{22}^{2}=0\\
aa_{12}^{2}-aa_{21}^{1}-ba_{22}^{1}=0\\
     \end{cases}$. We deduce that:\\
$a_{22}^{2}=a_{21}^{1}=a_{12}^{1},
a_{12}^{2}=a_{21}^{2}+\frac{1}{2}a_{12}^{1}$. So:
     $\nu_{i} =\biggl(\begin{pmatrix}
         a_{11}^{1}&a_{12}^{1}\\
         a_{12}^{1}&a_{22}^{1}
            \end{pmatrix},
        \begin{pmatrix}
          a_{11}^{2}&a_{21}^{2}+\frac{1}{2}a_{12}^{1}\\
         a_{21}^{2}+\frac{1}{2}a_{12}^{1}&a_{12}^{1}
            \end{pmatrix}\biggr)$.\\
Hence the $\mu_{t}$ deformations of the $\mu$ Hessian KV-structure:
\begin{eqnarray*}
        \biggl(\begin{pmatrix}
         a_{11}^{1}\sum_{i\geq 1}t^{i}  & a+ a_{12}^{1}\sum_{i\geq 1}t^{i}\\
           a+ a_{12}^{1}\sum_{i\geq 1}t^{i}&a_{22}^{1}\sum_{i\geq 1}t^{i}
            \end{pmatrix},
        \begin{pmatrix}
        b+ a_{11}^{2}\sum_{i\geq 1}t^{i}&(a_{21}^{2}+\frac{1}{2}a_{12}^{1})\sum_{i\geq 1}t^{i}\\
        (a_{21}^{2}+\frac{1}{2}a_{12}^{1})\sum_{i\geq 1}t^{i} &a+a_{12}^{1}\sum_{i\geq 1}t^{i}
            \end{pmatrix}\biggr)
\end{eqnarray*}

\end{proof}

\section{General conclusion}\label{sec5}
Deformation quantization of a Hessian structure associated with a
Koszul-Vinberg algebra opens up fascinating perspectives in both
representation theory and algebraic geometry. Through our study, we
have demonstrated how Hessian properties can be integrated into the
process of deformation to obtain a rich and coherent quantum
description. We have highlighted the profound interactions between
the algebraic structure of Koszul-Vinberg algebras on
$\mathrm{I\!R}^{2}$ and the  geometric aspects of quantization. The
results obtained highlight   not only the importance of Hessian
structures in the context of    quantization, but also their
potential to shed light on open questions     in theoretical
physics, such as the formulation of quantum field     theories and the understanding of dynamical systems.\\
In addition, the concrete examples we have provided illustrate the
relevance of our approach to solving specific problems. \backmatter

\bmhead{Supplementary information} This manuscript has no additional
data.

\bmhead{Acknowledgments} We would like to thank all the active
members of the algebra and geometry research group at the University
of Maroua in Cameroon.

\section*{Declarations}
This article has no conflict of interest to the journal. No
financing with a third party.
\begin{itemize}
\item No Funding
\item No Conflict of interest/Competing interests (check journal-specific guidelines for which heading to use)
\item  Ethics approval
\item  Consent to participate
\item  Consent for publication
\item  Availability of data and materials
\item  Code availability
\item Authors' contributions
\end{itemize}
\bibliography{bibliography}% common bib file
%%% if required, the content of .bbl file can be included here once bbl is generated
%%%\input sn-article.bbl

\end{document}